\newtheorem{thm}{Theorem} 
\newtheorem{lem}[thm]{Lemma} 
\newtheorem{prop}[thm]{Proposition}
\theoremstyle{definition}
\newtheorem{rmk}[thm]{Remark}
\numberwithin{thm}{section}
\numberwithin{equation}{section}
\newcommand{\C}{\mathbb{C}}
\newcommand{\F}{\mathbb{F}}
\newcommand{\Fbar}{\overline{F}}
\newcommand{\Q}{\mathbb{Q}}
\newcommand{\kb}{\overline{k}}
\newcommand{\Fsep}{F^{\text{sep}}}
\newcommand{\rhob}{\overline{\rho}}
\newcommand{\nubar}{\overline{\nu}}
\newcommand{\Fab}{F^{\text{ab}}}
\newcommand{\Gab}{\mathrm{G}^{\text{ab}}}
\newcommand{\lambdaab}{\lambda^{\text{ab}}}
\newcommand{\R}{\mathbb{R}}
\newcommand{\Z}{\mathbb{Z}}
\newcommand{\betab}{\overline{\beta}}
\newcommand{\mfa}{\mathfrak{a}}
\newcommand{\mfp}{\mathfrak{p}}
\newcommand{\mfq}{\mathfrak{q}}
\newcommand{\mfI}{\mathfrak{I}}
\newcommand{\Atil}{\widetilde{A}}
\newcommand{\cE}{\mathcal{E}}
\newcommand{\cP}{\mathcal{P}}
\newcommand{\cO}{\mathcal{O}}
\newcommand{\cFR}{\mathcal{FR}}
\newcommand{\cU}{\mathcal{U}}
\newcommand{\Ram}{\textsf{Ram}}
\newcommand{\cMn}{\mathcal{M}^{\textit{new}}}
\newcommand{\cNn}{\mathcal{N}^{\textit{new}}}
\newcommand{\cSn}{\mathcal{S}^{\textit{new}}}
\newcommand{\cTn}{\mathcal{T}^{\textit{new}}}
\newcommand{\id}{\mathrm{id}}
\newcommand{\Frob}{\mathrm{Frob}}
\newcommand{\Gal}{\mathrm{Gal}}
\newcommand{\G}{\mathrm{G}}
\newcommand{\M}{\mathrm{M}}
\newcommand{\N}{\mathrm{N}}
\newcommand{\GL}{\mathrm{GL}}
\newcommand{\End}{\mathrm{End}}
\newcommand{\Aut}{\mathrm{Aut}}
\newcommand{\im}{\mathrm{Im}\, }
\newcommand{\Norm}{\mathrm{Norm}}
\newcommand{\ord}{\mathrm{ord}}
\newcommand{\tr}{\mathrm{tr}}
\newcommand{\cf}{cf.\ }
\newcommand{\inj}{\hookrightarrow}
\newcommand{\resp}{resp.\ }
\newcommand{\ch}{\mathrm{char}\,}
\begin{document}

\title{Algebraic points on Shimura curves of $\Gamma_0(p)$-type (III)}
\author{Keisuke Arai}
\date{}



%
%

%
%



\maketitle


\begin{abstract}

In previous articles,
we classified the characters associated to algebraic points
on Shimura curves of $\Gamma_0(p)$-type, and over number fields
in a certain large class
we showed that
there are at most elliptic points on such a Shimura curve
for every sufficiently large prime number $p$.
%
In this article, we prove the non-existence of elliptic points
on Shimura curves of $\Gamma_0(p)$-type under a mild assumption.
We also give an explicit example.

\end{abstract}



\section{Introduction}
\label{intro}

Let $B$ be an indefinite quaternion division algebra over $\Q$
of discriminant $d$.
Fix a maximal order $\cO$ of $B$.
%
%
A \textit{QM-abelian surface by $\cO$} over a scheme $S$ is a pair $(A,i)$ where
$A$ is an abelian scheme over $S$ of relative dimension $2$, and 
$i:\cO\inj\End_S(A)$ 
is an injective ring homomorphism (sending $1$ to $\id$)
(\cf \cite[p.591]{Bu}).
Here $\End_S(A)$ is the ring of endomorphisms of $A$ defined over $S$.
We assume that $A$ has a left $\cO$-action.
We will sometimes omit ``by $\cO$" and simply write ``a QM-abelian surface"
if there is no fear of confusion.


Let $M^B$ be the coarse moduli scheme over $\Q$ parameterizing isomorphism classes
of QM-abelian surfaces by $\cO$ (\cf \cite[p.93]{J}).
Then $M^B$ is a proper smooth curve over $\Q$, called a \textit{Shimura curve}.
For a prime number $p$ not dividing $d$,
let $M_0^B(p)$
be the coarse moduli scheme over $\Q$ parameterizing isomorphism classes
of triples $(A,i,V)$ where $(A,i)$ is a QM-abelian surface by $\cO$
and $V$ is a left $\cO$-submodule of $A[p]$ with $\F_p$-dimension $2$.
Here $A[p]$ is the kernel of multiplication by $p$ in $A$.
Then $M_0^B(p)$ is a proper smooth curve over $\Q$, which we call a
\textit{Shimura curve of $\Gamma_0(p)$-type}.
We have a natural map $$\pi^B(p):M_0^B(p)\longrightarrow M^B$$
over $\Q$ defined by $(A,i,V)\longmapsto (A,i)$.
Note that $M^B$ (\resp $M_0^B(p)$) is an analogue of the modular curve $X_0(1)$
(\resp $X_0(p)$).

We say that a prime of a number field is \textit{of odd degree}
if the cardinality of the residue field is an odd power of the
residual characteristic.
The main result of this article is:

\begin{thm}
\label{mainthm}

Let $k$ be a finite Galois extension of $\Q$ which does not contain
the Hilbert class field of any imaginary quadratic field.
Assume that there is a prime $\mfq$ of $k$ such that
$\mfq$ is of odd degree,
the residual characteristic $q$ of $\mfq$ is unramified in $k$,
and
$B\otimes_{\Q}\Q(\sqrt{-q})\not\cong\M_2(\Q(\sqrt{-q}))$.
Then $M_0^B(p)(k)=\emptyset$ holds for every sufficiently large prime number $p$.

\end{thm}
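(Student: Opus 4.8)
The plan is to feed the classification from the first two papers of the series into a reduction argument at $\mfq$. Suppose $M_0^B(p)(k)\neq\emptyset$ for arbitrarily large $p$ and choose $P=(A,i,V)\in M_0^B(p)(k)$. Fixing a rank-one idempotent $e$ of $\cO\otimes\F_p\cong\M_2(\F_p)$, the module $W:=eA[p]$ is free of rank $2$ over $\F_p$ with a $G_k$-action, $V$ corresponds to a $G_k$-stable line of $W$, and we obtain characters $\lambda,\lambda'\colon G_k\to\F_p^\times$ (the actions on that line and on the quotient) with $\lambda\lambda'$ equal to the mod $p$ cyclotomic character $\chi_p$ up to a character of bounded order. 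By the results of Parts I and II — which use that $k/\Q$ is Galois and that $k$ contains no Hilbert class field of an imaginary quadratic field — for $p$ beyond an explicit bound $P$ is an elliptic point: $\pi^B(p)(P)$ is one of the finitely many elliptic points of $M^B$, so $(A,i)$ acquires complex multiplication over $\kb$ and $\lambda$ lies in the corresponding distinguished family; in particular, after enlarging $N$ to absorb a bounded-order character, there is a fixed even integer $N$ with $\lambda^{N}=\chi_p^{N/2}$.

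Since $(A,i)$ geometrically has complex multiplication, it has potentially good reduction everywhere, and — for $q$ away from the finitely many small primes at which these particular CM abelian surfaces have bad reduction — $A$ has good reduction at $\mfq$, with residue field $\F_{q^{f}}$, where $f$ (odd, by hypothesis) is the degree of $\mfq$. Let $\bar A$ be the reduction; then $W$ is unramified at $\mfq$ and $\Frob_{\mfq}$ acts on it with characteristic polynomial the reduction mod $p$ of $x^{2}-a\,x+q^{f}$, the characteristic polynomial of the $q^{f}$-power Frobenius endomorphism $\pi$ on the elliptic curve $e\bar A$; note $\pi$ commutes with $i(\cO)$, being defined over $\F_{q^{f}}$. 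Suppose first that $\bar A$ is supersingular. Since $q\ge5$ and $f$ is odd, any supersingular elliptic curve over $\F_{q^{f}}$ has $a=0$, so $\pi^{2}=-q^{f}$ and $\Q(\pi)\cong\Q(\sqrt{-q})$, while $\End^{0}(\bar A,i)$ — the centraliser of $i(B)$ in $\M_2(B_{q,\infty})=\End^{0}(\bar A)$ — is the definite quaternion algebra $B'$ over $\Q$ of discriminant $dq$. Hence $\Q(\sqrt{-q})\hookrightarrow B'$; as $\Q(\sqrt{-q})$ is automatically non-split at $q$ (ramified there) and at $\infty$ (being imaginary), this forces it to be non-split at every prime dividing $d$, i.e. $\Q(\sqrt{-q})\hookrightarrow B$, i.e. $B\otimes_{\Q}\Q(\sqrt{-q})\cong\M_2(\Q(\sqrt{-q}))$ — against the hypothesis. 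So $\bar A$ is not supersingular.

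Therefore $\bar A$ is ordinary and $q\nmid a$. Let $\beta,\bar\beta$ be the roots of $x^{2}-ax+q^{f}$ (lying in an imaginary quadratic field, $q$ being split there), so $\beta\bar\beta=q^{f}$ and $\beta/\bar\beta=\beta^{2}/q^{f}$; since $q\ge5$, $f$ is odd and $a\neq0$, this ratio is not a root of unity — a root of unity in a quadratic field has order dividing $6$, and each of the resulting identities $a^{2}=c\,q^{f}$ with $c\in\{1,2,3,4\}$ is impossible for odd $q$ and odd $f$, while $a=0$ is excluded. Modulo a prime $\mfP\mid p$ we have $\lambda(\Frob_{\mfq})\equiv\beta$ or $\bar\beta$, and $\lambda(\Frob_{\mfq})^{N}\equiv(q^{f})^{N/2}=(\beta\bar\beta)^{N/2}$; since $\beta$ is a unit mod $\mfP$, this yields $\mfP\mid(\beta^{N/2}-\bar\beta^{N/2})$, which is nonzero. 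As $a$ ranges over the finite set $\{a\in\Z:\lvert a\rvert\le2q^{f/2}\}$, we conclude that $p$ divides one of finitely many fixed nonzero integers — impossible for $p$ large. Taking $p$ past all the bounds produced above gives $M_0^B(p)(k)=\emptyset$ for $p\gg0$.

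I expect the substance to lie in two places. The conceptual heart is the supersingular case, where the hypothesis on $\Q(\sqrt{-q})$ is precisely the statement that $\Q(\sqrt{-q})$ fails to embed into the endomorphism algebra of a supersingular QM-abelian surface in characteristic $q$; everything else combines the earlier classification with an Eichler–Shimura/good-reduction computation. The delicate points are technical: invoking Parts I and II in exactly the form used here (the reduction to an elliptic point together with the family containing $\lambda$); arranging good reduction of $A$ at $\mfq$ without destroying the oddness of $f$; and separately disposing of the small residue characteristics ($q=2$ in particular), where the CM abelian surfaces at issue have bad reduction at $\mfq$ and one must argue with a potential-good-reduction model over a controlled extension, or invoke a different auxiliary prime.
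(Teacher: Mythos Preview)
Your proposal has a genuine gap at its very first step. You write ``choose $P=(A,i,V)\in M_0^B(p)(k)$'' and then work with a $\G_k$-action on $W=eA[p]$, but $M_0^B(p)$ is only a \emph{coarse} moduli scheme: a $k$-rational point need not be represented by a triple defined over $k$. The paper's Proposition~\ref{fieldM0Bp} gives sufficient conditions for this, the relevant one being $B\otimes_{\Q}k\cong\M_2(k)$, which is \emph{not} assumed in Theorem~\ref{mainthm}. When $B$ does not split over $k$, the earlier papers only produce a triple over a quadratic extension $K/k$ in the non-elliptic case $\Aut(x)=\{\pm1\}$, and then work with the character $\varphi$ on $\G_k$ built from $\lambda|_{\G_K}$ via the transfer map; for elliptic points ($\Aut(x)\ne\{\pm1\}$) neither clause of Proposition~\ref{fieldM0Bp} applies, one has no model over $k$, and there is no character $\lambda\colon\G_k\to\F_p^{\times}$ at all. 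Your relation $\lambda^{N}=\chi_p^{N/2}$, your Frobenius computation $\lambda(\Frob_{\mfq})\equiv\beta$, and the ensuing bound on $p$ all rest on this missing $\G_k$-structure. (The further issues you flag yourself --- good reduction at the \emph{given} $\mfq$, and the small characteristics $q\in\{2,3\}$ --- are real and unresolved, but secondary to this.)

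The paper's proof takes a quite different route that avoids any separate analysis of elliptic points. The new idea (Section~\ref{sec:elimination}) is to replace $k$ by $kW$ with $W=\Q(\sqrt{N})$ for a square-free integer $N$ divisible by $dq$, chosen so that (i) $B$ splits over $kW$, (ii) $kW$ is still Galois over $\Q$ and still contains no Hilbert class field of an imaginary quadratic field, and (iii) there is a prime of $kW$ above $\mfq$ of odd degree (this uses the hypothesis that $q$ is unramified in $k$, so that $q$ ramifies in $kW$ and $\N(\mfq')=\N(\mfq)$). Condition (i) puts one squarely in the split case of Theorem~\ref{mainthm0}(1), which already gives $M_0^B(p)(kW)=\emptyset$ for all large $p$, and hence $M_0^B(p)(k)=\emptyset$. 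The existence of such an $N$ (Lemma~\ref{Nexist}) is a short pigeonhole: if every $N\in\cU$ failed, each $kW_N$ would contain some $H_{J_N}$ with $h_{J_N}\le[k:\Q]$, and only finitely many imaginary quadratic fields have bounded class number, contradicting $\sharp\cU=\infty$. Your supersingular embedding $\Q(\sqrt{-q})\hookrightarrow B$ is in spirit the same endpoint as the paper's Lemma~\ref{BQ(-q)M2}(4), but there it is reached through the type~2 classification of $\varphi$ and a controlled potentially-good-reduction model over a totally ramified extension, not through a direct CM description of elliptic points.
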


\begin{rmk}
\label{ellippt?}

\begin{enumerate}[\upshape (1)]
\setlength{\itemsep}{0mm}
\setlength{\parskip}{0mm}
\item
Theorem \ref{mainthm} is an analogue of the results for points on the modular curve $X_0(p)$
(\cf \cite{Ma}, \cite{Mo}).
\item
We see that the Riemann surface $M_0^B(p)(\C)$ is isomorphic to a quotient of the upper half-plane,
and it often has elliptic points of order $2$ or $3$ (\cf \cite[\S 3]{AM}).
\item
In previous articles \cite{A4}, \cite{A5}, \cite{AM} (\cf \cite{AM2}), we could not exclude the
possibility of the existence of elliptic points in $M_0^B(p)(k)$.
\item
We see $M^B(\R)=\emptyset$ by \cite[Theorem 0]{Sh}.
Since there is a map
$\pi^B(p):M_0^B(p)\longrightarrow M^B$
over $\Q$, we have $M_0^B(p)(\R)=\emptyset$
for any prime number $p$ (not dividing $d$).
\end{enumerate}

\end{rmk}


%
\vspace{5mm}
\noindent
{\bf Notation}

For a field $F$,
let $\ch F$ denote the characteristic of $F$,
let $\Fbar$ denote an algebraic closure of $F$,
let $\Fsep$ (\resp $\Fab$) denote the separable closure
(\resp the maximal abelian extension) of $F$ inside $\Fbar$,
and let $\G_F=\Gal(\Fsep/F)$, $\Gab_F=\Gal(\Fab/F)$.
For a prime number $p$ and a field $F$ with $\ch F\ne p$, let
$\theta_p:\G_F\longrightarrow\F_p^{\times}$ denote the mod $p$ cyclotomic character.

For a number field $k$,
let $\cO_k$ denote the ring of integers of $k$;
put $\N(\mfq):=\sharp(\cO_k/\mfq)$ for a prime $\mfq$ of $k$;
let $Cl_k$ denote the ideal class group of $k$;
let $h_k$ denote the class number of $k$;
fix an inclusion $k\hookrightarrow\C$
and take the algebraic closure $\kb$ inside $\C$;
let $k_v$ denote the completion of $k$ at $v$
where $v$ is a place (or a prime) of $k$;
and let $\Ram (k)$ denote the set of prime numbers which are ramified in $k$.

\section{Galois representations associated to QM-abelian surfaces (generalities)}
\label{QM}

We review \cite[\S 3]{AM} briefly in order to
consider the Galois representations associated to a QM-abelian surface.
Take a prime number $p$ not dividing $d$.
Let $F$ be a field with $\ch F\ne p$.
Let $(A,i)$ be a QM-abelian surface by $\cO$ over $F$.
The action of $\G_F$ on $A[p](\Fsep)\cong\F_p^4$
determines a representation
$\rhob:\G_F\longrightarrow\GL_4(\F_p)$.
By a suitable choice of basis, $\rhob$ factors as
$$\rhob:\G_F\longrightarrow
\left\{\left(\begin{matrix} sI_2&tI_2 \\ uI_2&vI_2\end{matrix}\right) \Bigg|\,
\left(\begin{matrix} s&t \\ u&v\end{matrix}\right)\in\GL_2(\F_p)\right\}
\subseteq\GL_4(\F_p),$$
where
$I_2=\left(\begin{matrix} 1&0 \\ 0&1\end{matrix}\right)$.
Let
\begin{equation}
\label{rhobar}
\rhob_{A,p}:\G_F\longrightarrow\GL_2(\F_p)
\end{equation}
denote the Galois representation induced from $\rhob$ by
$``\left(\begin{matrix} s&t \\ u&v\end{matrix}\right)"$,
so that we have
$\rhob_{A,p}(\sigma)=\left(\begin{matrix}
s(\sigma)&t(\sigma) \\ u(\sigma)&v(\sigma)\end{matrix}\right)$
if
$\rhob(\sigma)=\left(\begin{matrix}
s(\sigma)I_2&t(\sigma)I_2 \\ u(\sigma)I_2&v(\sigma)I_2\end{matrix}\right)$
for $\sigma\in\G_F$.

Suppose that $A[p](\Fsep)$ has a left $\cO$-submodule $V$ with $\F_p$-dimension $2$
which is stable under the action of $\G_F$.
Then, by taking a conjugate if necessary, we may assume
$\rhob_{A,p}(\G_F)\subseteq
\left\{\left(\begin{matrix} 
s&t \\ 0&v
\end{matrix}\right)\right\}\subseteq\GL_2(\F_p)$.
Let
\begin{equation}
\label{lambda}
\lambda:\G_F\longrightarrow\F_p^{\times}
\end{equation}
denote the character induced from $\rhob_{A,p}$ by ``$s$", so that
$\rhob_{A,p}(\sigma)=
\left(\begin{matrix} 
\lambda(\sigma)&* \\ 0&*
\end{matrix}\right)$
for $\sigma\in\G_F$.
Note that
$\G_F$ acts on $V$ by $\lambda$
(i.e. $\rhob(\sigma)(v)=\lambda(\sigma)v$
for $\sigma\in\G_F$, $v\in V$).

\section{Automorphism groups}
\label{Aut}

We give a brief summary of \cite[\S 4]{AM} 
concerning the automorphism groups of a QM-abelian surface.
Let $(A,i)$ be a QM-abelian surface by $\cO$ over a field $F$.
Let $\End(A)$ (\resp $\Aut(A)$) denote the ring of endomorphisms
(the group of automorphisms) of $A$ defined over $\Fbar$.
Put
$$\End_{\cO}(A):=\{f\in\End(A)\mid f\circ i(g)=i(g)\circ f \text{\ \ for any $g\in\cO$}\},$$
$$\Aut_{\cO}(A):=\Aut(A)\cap\End_{\cO}(A).$$
If $\ch F=0$,
then $\Aut_{\cO}(A)\cong\Z/2\Z$, $\Z/4\Z$ or $\Z/6\Z$.

Let $p$ be a prime number not dividing $d$.
Let $(A,i,V)$ be a triple where $(A,i)$ is a QM-abelian surface by $\cO$ over
a field $F$
and $V$ is a left $\cO$-submodule of $A[p](\Fbar)$ with $\F_p$-dimension $2$.
Define a subgroup $\Aut_{\cO}(A,V)$ of $\Aut_{\cO}(A)$ by
$$\Aut_{\cO}(A,V):=\{f\in\Aut_{\cO}(A)\mid f(V)=V\}.$$
Assume $\ch F=0$.
Then $\Aut_{\cO}(A,V)\cong\Z/2\Z$, $\Z/4\Z$ or $\Z/6\Z$.
Note that we have
$\Aut_{\cO}(A)\cong\Z/2\Z$
(\resp $\Aut_{\cO}(A,V)\cong\Z/2\Z$)
if and only if
$\Aut_{\cO}(A)=\{\pm 1\}$
(\resp $\Aut_{\cO}(A,V)=\{\pm 1\}$).

\section{Fields of definition}
\label{fieldofdefinition}

We review \cite[\S 4]{AM} to consider the field of definition of a point
on $M_0^B(p)$.
Let $k$ be a number field.
Let $p$ be a prime number not dividing $d$.
Take a point
$$x\in M_0^B(p)(k).$$
Let $x'\in M^B(k)$ be the image
of $x$ by the map $\pi^B(p):M_0^B(p)\longrightarrow M^B$.
Then $x'$ is represented by a QM-abelian surface (say $(A_x,i_x)$) over $\kb$,
and $x$ is represented by a triple $(A_x,i_x,V_x)$
where $V_x$ is a left $\cO$-submodule of $A[p](\kb)$
with $\F_p$-dimension $2$.
For a finite extension $M$ of $k$,
we say that
\textit{we can take $(A_x,i_x,V_x)$ to be defined over $M$}
if there is a QM-abelian surface $(A,i)$ over $M$
and a left $\cO$-submodule $V$ of $A[p](\kb)$ with $\F_p$-dimension $2$
stable under the action of $\G_M$ such that there is an isomorphism
between $(A,i)\otimes_M\kb$ and $(A_x,i_x)$ under which $V$ corresponds
to $V_x$.
Put
$$\Aut(x):=\Aut_{\cO}(A_x,V_x),\ \ \ \ \Aut(x'):=\Aut_{\cO}(A_x).$$
Then $\Aut(x)$ is a subgroup of $\Aut(x')$.
Note that $x$ is an elliptic point of order $2$ (\resp $3$)
if and only if
$\Aut(x)\cong\Z/4\Z$
(\resp $\Aut(x)\cong\Z/6\Z$).
Since $x$ is a $k$-rational point, we have 
$^{\sigma}x=x$ for any $\sigma\in\G_k$.
Then, for any $\sigma\in\G_k$, there is an isomorphism
$$\phi_{\sigma}:{}^{\sigma}(A_x,i_x,V_x)\longrightarrow (A_x,i_x,V_x),$$
which we fix once for all.
For $\sigma,\tau\in\G_k$, put
$$c_x(\sigma,\tau)
:=\phi_{\sigma}\circ {}^{\sigma}\phi_{\tau}
\circ\phi_{\sigma\tau}^{-1}\in\Aut(x).$$
Then $c_x$ is a $2$-cocycle
and defines a cohomology class
$[c_x]\in H^2(\G_k,\Aut(x))$.
Here the action of $\G_k$ on $\Aut(x)$
is defined in a natural manner (\cf \cite[\S 4]{AM}).
For a place $v$ of $k$, let $[c_x]_v\in H^2(\G_{k_v},\Aut(x))$
denote the restriction of $[c_x]$ to $\G_{k_v}$.

\begin{prop}[{\cite[Proposition 4.2]{AM}}]
\label{fieldM0Bp}

\begin{enumerate}[\upshape (1)]
\setlength{\itemsep}{0mm}
\setlength{\parskip}{0mm}
\item
Suppose $B\otimes_{\Q}k\cong\M_2(k)$.
Further assume $\Aut(x)\ne\{\pm 1\}$ or
$\Aut(x')\not\cong\Z/4\Z$.
Then we can take $(A_x,i_x,V_x)$ to be defined over $k$.
\item
Assume $\Aut(x)=\{\pm 1\}$.
Then there is a quadratic extension $K$ of $k$
such that we can take $(A_x,i_x,V_x)$ to be defined over $K$.
\end{enumerate}
\end{prop}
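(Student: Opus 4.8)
The statement is a Galois-descent theorem, and the plan is to recognize $[c_x]\in H^2(\G_k,\Aut(x))$ as the complete obstruction to descending the triple $(A_x,i_x,V_x)$ from $\kb$ to a given finite extension of $k$. Concretely: for a finite extension $M/k$, the triple has a model over $M$ exactly when the isomorphisms $\phi_\sigma$ ($\sigma\in\G_M$) can be chosen so that $\phi_{\sigma\tau}=\phi_\sigma\circ{}^\sigma\phi_\tau$ --- such a system is a descent datum, and since $A_x$ is an abelian variety, hence projective, Weil's descent theorem makes it effective, the $\cO$-action $i_x$ and the subgroup $V_x$ descending together with $A_x$. Replacing $\phi_\sigma$ by $a_\sigma\phi_\sigma$ for an arbitrary $1$-cochain $(a_\sigma)$ valued in $\Aut(x)$ changes $c_x$ into $(\partial a)\,c_x$, where $\partial a$ is the coboundary in the $\G_k$-module $\Aut(x)$; so $[c_x]$ is well defined, and one can arrange the cocycle identity over $\G_M$ if and only if $\Res_{M/k}[c_x]=0$ in $H^2(\G_M,\Aut(x))$. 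Thus (1) asks for $[c_x]=0$, and (2) for a quadratic extension $K/k$ with $\Res_{K/k}[c_x]=0$. I record also that, since $M^B(\R)=\emptyset$ while $x'\in M^B(k)$, the field $k$ has no real place.

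\textbf{Part (2).} Here $\Aut(x)=\{\pm1\}$ with trivial $\G_k$-action, so the Kummer sequence together with Hilbert's Theorem 90 gives $H^2(\G_k,\Aut(x))\cong\Br(k)[2]$, and $[c_x]$ is the class of a central simple $k$-algebra of exponent dividing $2$. Over a number field exponent equals index, so this algebra is a quaternion algebra $B_x$. Every quaternion algebra over a field is split by some separable quadratic extension --- it is either $\M_2(k)$ already, or a division algebra, in which case any of its quadratic maximal subfields splits it. For such a $K$ one gets $\Res_{K/k}[c_x]=0$, and the first paragraph yields a model of $(A_x,i_x,V_x)$ over $K$.

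\textbf{Part (1).} Assume $B\otimes_\Q k\cong\M_2(k)$; I would show $[c_x]=0$. The heart of the matter, when $\Aut(x)=\{\pm1\}$, is the identification of $[c_x]\in\Br(k)[2]$ with the class of $B\otimes_\Q k$, up to a correction at the real places of $k$: this is the known computation (Jordan) of the obstruction to the field of moduli being a field of definition for a QM-abelian surface whose $\cO$-automorphism group is $\{\pm1\}$, and it applies verbatim to the triple because here $\Aut(x)=\Aut(x')=\{\pm1\}$, so adjoining $V_x$ does not change the obstruction. Since $k$ is totally imaginary the correction is vacuous, so $[c_x]=[B\otimes_\Q k]=0$. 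When $\Aut(x)\cong\Z/4\Z$ or $\Z/6\Z$, the point $x'$ is a $k$-rational elliptic point of $M^B$, so the elliptic curve underlying $(A_x,i_x)$ is defined over $k$ (its $j$-invariant lying in $k$), and with $B\otimes_\Q k$ split one then transports the $\cO$-action and $V_x$ to obtain a model of the whole triple over $k$; in cohomological terms, from the long exact cohomology sequence of $1\to\{\pm1\}\to\Aut(x)\to\Aut(x)/\{\pm1\}\to1$ one finds that the image of $[c_x]$ in $\Br(k)[2]$ is $[B\otimes_\Q k]=0$ and that the residual component in $H^2(\G_k,\Aut(x)/\{\pm1\})$ vanishes, the relevant coarse-moduli point always lifting. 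The one configuration where the residual obstruction can fail to vanish is $\Aut(x)=\{\pm1\}$ with $\Aut(x')\cong\Z/4\Z$, where the order-$4$ automorphism of $(A_x,i_x)$ does not preserve $V_x$ and so cannot be used to rigidify the descent data of the triple; this is exactly the configuration excluded in the hypothesis.

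\textbf{Main obstacle.} The descent formalism and the cohomological bookkeeping are routine; the substance is (i) the explicit identification of the descent obstruction with (the class of) $B\otimes_\Q k$ in the case $\Aut(x)=\{\pm1\}$, and (ii) in the cases with extra automorphisms, controlling the long exact sequence attached to $1\to\{\pm1\}\to\Aut(x)\to\Aut(x)/\{\pm1\}\to1$, determining the $\G_k$-module structure of $\Aut(x)$, and checking that the residual obstruction vanishes precisely outside the excluded configuration. I expect essentially all of the difficulty to lie there.
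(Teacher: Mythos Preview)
The paper does not prove this proposition; it is quoted verbatim from \cite[Proposition~4.2]{AM} and used as a black box, so there is no proof in the present paper to compare your proposal against.

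That said, your outline is the standard descent argument and is essentially what underlies the proof in \cite{AM} (which in turn rests on Jordan's computation of the obstruction class in \cite{J}). Part~(2) is clean and correct. In Part~(1) your sketch is on the right track but loose in two places. First, the hypothesis allows the case $\Aut(x)=\{\pm1\}$ with $\Aut(x')\cong\Z/6\Z$, which is not covered by your sentence ``here $\Aut(x)=\Aut(x')=\{\pm1\}$, so adjoining $V_x$ does not change the obstruction''; one still has to argue in that case. Second, the phrase ``the elliptic curve underlying $(A_x,i_x)$ is defined over $k$ (its $j$-invariant lying in $k$)'' glosses over precisely the descent problem at stake: $j\in k$ determines the curve only up to twist, and recovering the $\cO$-action and $V_x$ over $k$ from this is not automatic. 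These are exactly the points where the case-by-case work in \cite{AM} is done, and your final paragraph correctly flags that the substance lies there.
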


\begin{lem}[{\cite[Lemma 4.3]{AM}}]
\label{fieldofdef}

Let $K$ be a quadratic extension of $k$.
Assume $\Aut(x)=\{\pm 1\}$.
Then the following two conditions are equivalent:
%

\noindent
{\rm (1)}
We can take $(A_x,i_x,V_x)$ to be defined over $K$.
%

\noindent
{\rm (2)}
For any place $v$ of $k$ satisfying $[c_x]_v\ne 0$,
the tensor product $K\otimes_k k_v$ is a field.
\end{lem}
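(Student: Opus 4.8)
The plan is to interpret condition (1) as the vanishing of a Galois cohomology class obtained by restricting $[c_x]$, and then to translate that vanishing into a purely local statement by means of the Hasse principle for Brauer groups. First I would record the cohomological set-up. Since $\Aut(x)=\{\pm 1\}$, the $\G_k$-action on it is trivial and we may identify $\Aut(x)=\mu_2=\Z/2\Z$ canonically; the Kummer sequence together with Hilbert's Theorem~90 then gives identifications $H^2(\G_k,\Aut(x))\cong\Br(k)[2]$ and $H^2(\G_{k_v},\Aut(x))\cong\Br(k_v)[2]$ for each place $v$, compatible with restriction, under which $[c_x]$ becomes a $2$-torsion element of $\Br(k)$ with local components $[c_x]_v$.

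Next I would carry out the descent step: for any finite extension $M$ of $k$, one can take $(A_x,i_x,V_x)$ to be defined over $M$ if and only if $[c_x]|_{\G_M}=0$ in $H^2(\G_M,\Aut(x))$. Restricting the chosen isomorphisms $\phi_\sigma$ to $\sigma\in\G_M$, their failure to form a descent datum is measured by the $2$-cocycle $c_x|_{\G_M\times\G_M}$, of class $[c_x]|_{\G_M}$; this vanishes exactly when the $\phi_\sigma$ can be modified by a $1$-cochain valued in $\Aut(x)$ so as to satisfy the cocycle condition, and an effective descent datum for the triple then yields a model over $M$ (with $V$ recovered as the $\G_M$-stable $\cO$-submodule of the $p$-torsion corresponding to $V_x$), while conversely a model over $M$ furnishes such a descent datum. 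Taking $M=K$, condition (1) becomes $[c_x]|_{\G_K}=0$.

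Finally I would localize. By the Albert--Brauer--Hasse--Noether theorem, $\Br(K)\to\bigoplus_w\Br(K_w)$ is injective, so $[c_x]|_{\G_K}=0$ if and only if $[c_x]|_{\G_{K_w}}=0$ for every place $w$ of $K$. For $w\mid v$, the class $[c_x]|_{\G_{K_w}}$ is the image of $[c_x]_v$ under restriction $\Br(k_v)[2]\to\Br(K_w)[2]$. If $K\otimes_k k_v$ is a field, then $K_w/k_v$ is quadratic, so on local invariants this restriction is multiplication by $2$ and kills $\Br(k_v)[2]$; such $v$ impose no constraint. If instead $v$ splits in $K$, then $K_w=k_v$ and the restriction is the identity, so the constraint at $v$ is $[c_x]_v=0$. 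Hence $[c_x]|_{\G_K}=0$ if and only if $[c_x]_v=0$ for every $v$ that splits in $K$, i.e.\ if and only if every $v$ with $[c_x]_v\ne 0$ is non-split in $K$ --- which is exactly condition (2). (Archimedean places fit the same pattern: $\Br(\C)=0$ imposes nothing, and a real place that is non-split has $K_w=\C$, onto which restriction vanishes.)

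The step I expect to require the most care is the descent reduction: verifying that the obstruction to descending $(A_x,i_x,V_x)$ to $M$ is precisely $[c_x]|_{\G_M}$ and that, once this class vanishes, Galois descent is effective for the triple, with the level structure $V$ descending along with the abelian surface. The remainder is the standard dictionary between $H^2(-,\mu_2)$ and $2$-torsion in Brauer groups, the Hasse principle, and the elementary behaviour of local invariants under a quadratic extension.
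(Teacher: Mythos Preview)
The paper does not supply its own proof of this lemma: it is quoted verbatim from \cite[Lemma~4.3]{AM} and used as a black box, so there is no argument in the present paper to compare against. Your proposal is the standard and correct proof one would expect to find in the cited source: identify $H^2(\G_k,\{\pm 1\})\cong\Br(k)[2]$ via Kummer theory, recognise that the obstruction to descending the triple to $K$ is $\Res_{K/k}[c_x]$, and then use the Albert--Brauer--Hasse--Noether theorem together with the fact that restriction along a local quadratic extension kills $2$-torsion in the Brauer group. The one point worth tightening, as you yourself flag, is the descent step: since $(A_x,i_x)$ is an abelian variety (hence projective) with finite automorphism group, Weil's criterion guarantees that a coherent system of isomorphisms $\{\phi_\sigma\}_{\sigma\in\G_K}$ satisfying the cocycle condition does yield a model over $K$, and the $\cO$-action and the subgroup $V_x$ come along for free because they are preserved by each $\phi_\sigma$.
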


\section{Classification of characters}
\label{char}

We keep the notation in Section \ref{fieldofdefinition}.
Throughout this section, assume $\Aut(x)=\{\pm 1\}$.
Let $K$ be a quadratic extension of $k$
which satisfies the equivalent conditions in
Lemma \ref{fieldofdef}.
Then $x$ is represented by a triple $(A,i,V)$,
where $(A,i)$ is a QM-abelian surface over $K$ and $V$ is a left
$\cO$-submodule of $A[p](\kb)$ with $\F_p$-dimension $2$
stable under the action of $\G_K$.
Let
$$\lambda:\G_K\longrightarrow\F_p^{\times}$$
be the character associated to $V$ in (\ref{lambda}).
Let $\lambdaab:\Gab_K\longrightarrow\F_p^{\times}$
be the natural map induced from $\lambda$.
Put
\begin{equation}
\label{phi}
\varphi:=\lambdaab\circ\tr_{K/k}:\G_k\longrightarrow\Gab_K
\longrightarrow\F_p^{\times},
\end{equation}
where $\tr_{K/k}:\G_k\longrightarrow\Gab_K$ is the transfer map.
By \cite[Lemma 5.1]{AM} (\resp \cite[Corollary 5.2]{AM}), the character
$\lambda^{12}$ (\resp $\varphi^{12}$) is unramified at every prime of $K$
(\resp $k$) not dividing $p$,
and so it
corresponds to a character of the
ideal group $\mfI_K(p)$ (\resp $\mfI_k(p)$) consisting of
fractional ideals of $K$ (\resp $k$) prime to $p$.
By abuse of notation, let $\lambda^{12}$ (\resp $\varphi^{12}$) also denote
the corresponding character
of $\mfI_K(p)$ (\resp $\mfI_k(p)$).

Let $\cMn$ be the set of prime numbers which split
completely in $k$.
%
Let $\cNn$ be the set of primes of $k$
which divide some prime number $q\in \cMn$.
Take a finite subset $\emptyset\ne \cSn\subseteq \cNn$
which generates $Cl_k$.
For each prime $\mfq\in \cSn$, fix an element $\alpha_{\mfq}\in\cO_k\setminus\{0\}$
satisfying $\mfq^{h_k}=\alpha_{\mfq}\cO_k$.
For an integer $n\geq 1$, put
$$\cFR(n):=\Set{\beta\in\C|
\beta^2+a\beta+n=0 \text{ for some integer $a\in\Z$ with $|a|\leq 2\sqrt{n}$}}.$$
For any element $\beta\in\cFR(n)$, we have $|\beta|=\sqrt{n}$.
When $k$ is Galois over $\Q$, define the sets

\noindent
$\cE(k):=\Set{\varepsilon_0=\displaystyle\sum_{\sigma\in\Gal(k/\Q)}a_{\sigma}\sigma
\in\Z[\Gal(k/\Q)]|
a_{\sigma}\in\{0,8,12,16,24 \}}$,

\noindent
$\cMn_1(k):=
\Set{(\mfq,\varepsilon_0,\beta_{\mfq})|
\mfq\in \cSn,\ \varepsilon_0
\in\cE(k),\ 
\beta_{\mfq}\in\cFR(\N(\mfq))}$,

\noindent
$\cMn_2(k):=\Set{\Norm_{k(\beta_{\mfq})/\Q}(\alpha_{\mfq}^{\varepsilon_0}-\beta_{\mfq}^{24h_k})\in\Z|
(\mfq,\varepsilon_0,\beta_{\mfq})\in\cMn_1(k)}\setminus\{0\}$,

\noindent
$\cNn_0(k):=\Set{\text{$l$ : prime number}|\text{$l$ divides some integer $m\in\cMn_2(k)$}}$,

\noindent
$\cTn(k):=\Set{\text{$l'$ : prime number}|\text{$l'$ is divisible
by some prime $\mfq'\in \cSn$}}
\cup\{2,3\}$,

\noindent
$\cNn_1(k):=\cNn_0(k)\cup\cTn(k)\cup\Ram(k)$.

\noindent
Note that all the sets, $\cFR(n)$, $\cE(k)$, $\cMn_1(k)$, $\cMn_2(k)$, $\cNn_0(k)$, $\cTn(k)$,
and $\cNn_1(k)$, are finite.
In \cite{A5}, an upper bound of $\cNn_1(k)$ is given.
We have the following classification of $\varphi$:

\begin{thm}[{\cite[Theorem 5.1]{A5}}]
\label{type23phi}

Assume that $k$ is Galois over $\Q$.
If $p\not\in\cNn_1(k)$
(and if $p$ does not divide $d$),
then the character
$\varphi:\G_k\longrightarrow \F_p^{\times}$
is of one of the following types:
\begin{description}
\setlength{\itemsep}{0mm}
\setlength{\parskip}{0mm}
\item
{\rm Type 2}.
$\varphi^{12}=\theta_p^{12}$ and $p\equiv 3\bmod{4}$.
\item
{\rm Type 3}.
There is an imaginary quadratic field $L$ satisfying the following
conditions:
\begin{enumerate}[\upshape (a)]
\setlength{\itemsep}{0mm}
\setlength{\parskip}{0mm}
\item
The Hilbert class field $H_L$ of $L$ is contained in $k$.
\item
There is a prime $\mfp_L$ of $L$ lying over $p$
such that
$\varphi^{12}(\mfa)\equiv\delta^{24}\bmod{\mfp_L}$ holds
for any fractional ideal $\mfa$ of $k$ prime to $p$.
Here $\delta$ is any element of $L$ such that
$\Norm_{k/L}(\mfa)=\delta\cO_L$.
\end{enumerate}
\end{description}
\end{thm}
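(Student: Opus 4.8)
The plan is to pin down the character $\varphi^{12}$ completely, as a character of the ideal group $\mfI_k(p)$, and then to read off the asserted dichotomy. Recall the standing setup of this section: $\Aut(x)=\{\pm 1\}$, there is a quadratic extension $K$ of $k$ as in Lemma~\ref{fieldofdef}, a QM-abelian surface $(A,i)$ over $K$, and a $\G_K$-stable left $\cO$-submodule $V\subseteq A[p](\kb)$ of $\F_p$-dimension $2$ on which $\G_K$ acts through $\lambda:\G_K\to\F_p^{\times}$, with $\varphi=\lambdaab\circ\tr_{K/k}$. By \cite[Lemma 5.1]{AM} and \cite[Corollary 5.2]{AM}, the characters $\lambda^{12}$ and $\varphi^{12}$ are unramified outside $p$, hence are characters of $\mfI_K(p)$ and $\mfI_k(p)$; since $k/\Q$ is Galois and $p\notin\Ram(k)$, the group $\Gal(k/\Q)$ acts on these, and I would track $\varphi^{12}$ together with all of its conjugates at once. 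The character $\varphi^{12}$ is then determined by two pieces of data: its ``infinity type'' (its restriction to inertia at the primes over $p$), which comes from a local analysis at $p$, and its values on a set of generators of $\mfI_k(p)$ modulo principal ideals, which come from reducing $A$ at good primes; matching the two leaves only Type~2 and Type~3.

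The local analysis at a prime of $K$ above $p$ is the step I expect to be the main obstacle. A QM-abelian surface has potential good reduction everywhere (as $B$ is a division algebra), so after a controlled base change $A[p]$ extends to a finite flat group scheme whose generic fibre has Hodge--Tate weights in $\{0,1\}$; feeding this into Raynaud's classification of finite flat group schemes of type $(p,\dots,p)$ and Fontaine's ramification bounds --- the mechanism Mazur~\cite{Ma} and Momose~\cite{Mo} use for $X_0(p)$ --- one shows that $\lambda^{12}$, restricted to inertia at such a prime, is an explicit power of $\theta_p$, the exponent $12$ being what clears the level-$2$ (supersingular) ambiguity and the torsion coming from the automorphisms of the underlying QM-abelian surface. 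Composing with $\tr_{K/k}$ introduces the factor $[K:k]=2$ and the possible quadratic ramification of $K/k$ at $p$, and one ends up with: $\varphi^{12}$, as a character of $\mfI_k(p)$, has infinity type recorded by some $\varepsilon_0=\sum_{\sigma\in\Gal(k/\Q)}a_{\sigma}\sigma$ with every $a_{\sigma}\in\{0,8,12,16,24\}$, i.e.\ $\varepsilon_0\in\cE(k)$ --- the fractions $8/12$ and $16/12$ reflecting the order-$3$ contributions --- together with, for a principal ideal $(\alpha)$ prime to $p$, a congruence of the shape $\varphi^{12}((\alpha))^{2}\equiv\alpha^{\varepsilon_0}\bmod\mfp$ for a suitable prime $\mfp\mid p$, up to a bounded root of unity absorbed into the exponents. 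Making this precise, and arranging that a single $\varepsilon_0$ serves all $\Gal(k/\Q)$-conjugates simultaneously, is the delicate part.

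Next I would do the global computation. Fix $\mfq\in\cSn$; it divides a rational prime $q$ that splits completely in $k$, so $\N(\mfq)=q$, and $\mfq\nmid p$ because $p\notin\cTn(k)$. After discarding finitely many $\mfq$ (harmless, as $\cSn$ may be chosen among primes of good reduction), $A$ reduces to a QM-abelian surface $\Atil$ over a finite field, whose Frobenius endomorphism commutes with the $\cO$-action and so has reduced characteristic polynomial $T^{2}-aT+\N(\mfq)$ with $|a|\le 2\sqrt{\N(\mfq)}$ by the Weil bound; hence $\varphi^{12}(\mfq)\equiv\beta_{\mfq}^{24}\bmod\mfp$ for a suitable prime $\mfp$ above $p$ and some Weil number $\beta_{\mfq}\in\cFR(\N(\mfq))$, the exponent $24$ being $12\cdot[K:k]$, reflecting that $\varphi$ involves the transfer from $K$. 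Raising to the power $h_k$ and using $\mfq^{h_k}=\alpha_{\mfq}\cO_k$ together with the infinity type above, we obtain $\alpha_{\mfq}^{\varepsilon_0}\equiv\beta_{\mfq}^{24h_k}\bmod\mfp$ for some $\varepsilon_0\in\cE(k)$ and $\beta_{\mfq}\in\cFR(\N(\mfq))$. If $\alpha_{\mfq}^{\varepsilon_0}\neq\beta_{\mfq}^{24h_k}$ in $\C$, then $\Norm_{k(\beta_{\mfq})/\Q}(\alpha_{\mfq}^{\varepsilon_0}-\beta_{\mfq}^{24h_k})$ is a nonzero element of $\cMn_2(k)$ divisible by $p$ --- impossible, since $p\notin\cNn_0(k)$. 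Hence $\alpha_{\mfq}^{\varepsilon_0}=\beta_{\mfq}^{24h_k}$ in $\C$ for every $\mfq\in\cSn$, with one common $\varepsilon_0$ after a pigeonhole over the finite set $\cE(k)$.

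Finally I would extract the dichotomy. Comparing archimedean absolute values in $\alpha_{\mfq}^{\varepsilon_0}=\beta_{\mfq}^{24h_k}$ under every embedding $k\hookrightarrow\C$, using $|\beta_{\mfq}|=\sqrt{\N(\mfq)}$ and $|\Norm_{k/\Q}(\alpha_{\mfq})|=\N(\mfq)^{h_k}$, constrains the multiset $\{a_{\sigma}\}$. If all $a_{\sigma}$ are equal, they must equal $12$, so $\alpha_{\mfq}^{\varepsilon_0}=\N(\mfq)^{12h_k}$ is a positive rational, $\beta_{\mfq}^{2}$ is $\N(\mfq)$ times a root of unity, and a short analysis of $\beta^{2}+a\beta+\N(\mfq)=0$ with $\N(\mfq)$ prime forces $a=0$, i.e.\ $\beta_{\mfq}=\sqrt{-\N(\mfq)}$; since $\cSn$ generates $Cl_k$, the character $\varphi^{12}$ then agrees with $\theta_p^{12}$ on a set of generators of $\mfI_k(p)$ modulo principal ideals, and combined with the local data this forces $\varphi^{12}=\theta_p^{12}$, while the congruence $p\equiv 3\bmod 4$ drops out once one keeps track of $\lambda$ itself (equivalently, of the quadratic twist relating $V$ and $A[p]/V$) rather than only $\lambda^{12}$ --- this is Type~2. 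If the $a_{\sigma}$ are not all equal, then $\varepsilon_0$ is fixed by a proper subgroup $H\leq\Gal(k/\Q)$, so $\beta_{\mfq}^{24h_k}$, and hence $\varphi^{12}$, factors through the norm down to $L:=k^{H}$; the shape of $\cE(k)$ forces $L$ to be imaginary quadratic, unwinding the identity yields $\varphi^{12}(\mfa)\equiv\delta^{24}\bmod\mfp_L$ whenever $\Norm_{k/L}(\mfa)=\delta\cO_L$, and compatibility of this with the generation of $Cl_k$ by the $\mfq\in\cSn$ forces the Hilbert class field $H_L$ to lie in $k$ --- this is Type~3. The routine points I have deliberately skipped are the explicit root-of-unity case-check, the bookkeeping of the exponent $24=12\cdot 2$, and the verification that no pattern of $\{a_{\sigma}\}$ other than these two survives the absolute-value comparison.
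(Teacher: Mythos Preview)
The present paper does not contain a proof of this theorem: it is quoted verbatim from \cite[Theorem~5.1]{A5} and used here as a black box. So there is no ``paper's own proof'' to compare your proposal against.

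That said, your outline is a recognisable Momose-type strategy and matches the architecture one expects from \cite{A5} and \cite{AM}: a local analysis at primes above $p$ (potential good reduction, Raynaud-type constraints) to pin $\lambda^{12}$ on inertia and thereby assign to $\varphi^{12}$ an ``infinity type'' $\varepsilon_0\in\cE(k)$; a global Frobenius computation at the primes $\mfq\in\cSn$ giving $\varphi^{12}(\mfq)\equiv\beta_{\mfq}^{24}$; and then the norm trick with $\alpha_{\mfq}^{\varepsilon_0}-\beta_{\mfq}^{24h_k}$, which is exactly what the finite sets $\cMn_2(k)$ and $\cNn_0(k)$ are designed to absorb. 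The dichotomy you extract (constant $a_\sigma=12$ versus non-constant $a_\sigma$ forcing a CM field $L$) is the right shape. Two places where your sketch is hand-wavy and would need the actual argument from \cite{A5}: (i) the claim that a \emph{single} $\varepsilon_0$ works simultaneously for all $\Gal(k/\Q)$-conjugates is not a pigeonhole but comes from the fact that the isogeny character is attached to one abelian surface, so the inertial weights at the various $\mfp\mid p$ are linked a priori; and (ii) the derivation of $p\equiv 3\bmod 4$ in Type~2 is not a bookkeeping afterthought but uses a genuine finer analysis of $\lambda$ at supersingular primes (this is where the exponents $8,16$ in $\cE(k)$, as opposed to just $0,12,24$, enter). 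If you intend to write this up, you should consult \cite{A5} directly rather than reconstruct it.
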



\begin{lem}[{\cite[Lemma 5.2]{A5}}]
\label{type2lambda}

Suppose that $k$ is Galois over $\Q$, and $p\geq 11$, $p\ne 13$, $p\not\in\cNn_1(k)$.
Further assume the following two conditions:
\begin{enumerate}[\upshape (a)]
\setlength{\itemsep}{0mm}
\setlength{\parskip}{0mm}
\item
Every prime of $k$ above $p$ is inert in $K/k$.
\item
Every prime of $k$ in $\cSn$ is ramified in $K/k$.
\end{enumerate}
\noindent
If $\varphi$ is of type 2, then we have the following assertions:
\begin{enumerate}[\upshape (i)]
\setlength{\itemsep}{0mm}
\setlength{\parskip}{0mm}
\item
The character $\lambda^{12}\theta_p^{-6}:\G_K\longrightarrow\F_p^{\times}$
is unramified everywhere.
\item
The map $Cl_K\longrightarrow\F_p^{\times}$ induced from $\lambda^{12}\theta_p^{-6}$
is trivial on
$C_{K/k}:=\im(Cl_k\longrightarrow Cl_K)$,
where
$Cl_k\longrightarrow Cl_K$
is the map defined by
$[\mfa]\longmapsto[\mfa\cO_K]$.
\end{enumerate}
\end{lem}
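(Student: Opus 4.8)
The plan is to exploit the relation between $\lambda$ on $\G_K$ and $\varphi$ on $\G_k$ coming from the transfer map, together with the type-2 hypothesis $\varphi^{12}=\theta_p^{12}$. First I would recall from Section \ref{char} that $\lambda^{12}$ is unramified at every prime of $K$ not dividing $p$, so the only possible ramification of $\lambda^{12}\theta_p^{-6}$ is at primes of $K$ above $p$. To handle those, I would use hypothesis (a): if $\mathfrak{P}$ is a prime of $K$ above $p$, it is inert over the prime $\mathfrak{p}=\mathfrak{P}\cap\cO_k$ of $k$, so the inertia subgroups $I_{\mathfrak{P}}\subseteq\G_K$ and $I_{\mathfrak{p}}\subseteq\G_k$ coincide (inertia is insensitive to unramified extensions). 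On this common inertia group, the transfer map $\tr_{K/k}$ restricted to $I_{\mathfrak{p}}$ is, up to the local norm, essentially the natural inclusion composed with the identity on the residue-field-fixed part — more precisely one computes that $\varphi|_{I_{\mathfrak{p}}}=\lambda^2|_{I_{\mathfrak{p}}}$ (the factor $2=[K:k]$ being the local degree since $\mathfrak{p}$ is inert), because the transfer for an unramified quadratic extension acts on an inertia element $\sigma$ by $\sigma\mapsto\sigma\cdot{}^{g}\sigma$ where $g$ is a lift of Frobenius, and on the wild/tame inertia the Frobenius conjugate is accounted for by the cyclotomic twist. Comparing twelfth powers: $\theta_p^{12}=\varphi^{12}=\lambda^{24}$ on $I_{\mathfrak{p}}=I_{\mathfrak{P}}$, hence $(\lambda^{12}\theta_p^{-6})^2=\lambda^{24}\theta_p^{-12}=\theta_p^{12}\theta_p^{-12}=1$ on $I_{\mathfrak{P}}$; since $p$ is large the group $\F_p^{\times}$ has no $2$-torsion obstruction beyond $\{\pm1\}$, and a more careful bookkeeping of the tame character (using that tame inertia maps onto the residue field's multiplicative group, whose order is prime to $2$ after the relevant normalization) forces $\lambda^{12}\theta_p^{-6}$ itself to be trivial on $I_{\mathfrak{P}}$. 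This gives assertion (i).

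For assertion (ii), I would pass to the ideal-theoretic incarnation: by (i), $\lambda^{12}\theta_p^{-6}$ factors through $Cl_K\to\F_p^{\times}$. I need to show this is trivial on the image $C_{K/k}$ of $Cl_k$. The key identity is that the composition $\mathfrak{I}_k(p)\xrightarrow{\mathfrak{a}\mapsto\mathfrak{a}\cO_K}\mathfrak{I}_K(p)\xrightarrow{\lambda^{12}}\F_p^{\times}$ equals $\varphi^{12}$ on $\mathfrak{I}_k(p)$ — this is precisely the compatibility of the transfer map with extension of ideals (Artin reciprocity: the transfer corresponds to the inclusion of ideal groups). Therefore, for a fractional ideal $\mathfrak{a}$ of $k$ prime to $p$, $(\lambda^{12}\theta_p^{-6})(\mathfrak{a}\cO_K)=\varphi^{12}(\mathfrak{a})\cdot\theta_p^{-6}(\mathfrak{a}\cO_K)=\theta_p^{12}(\mathfrak{a})\cdot\theta_p^{-6}(\mathfrak{a}\cO_K)$, using type 2. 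Now $\theta_p$ as an ideal character sends a prime $\mathfrak{l}$ to (a power of) its norm mod $p$, and $\theta_p(\mathfrak{a}\cO_K)=\theta_p(\N_{K/k}(\mathfrak{a}\cO_K))^{?}$ — I would instead argue directly: on $C_{K/k}$ the cyclotomic character $\theta_p^{-6}$ evaluated on $\mathfrak{a}\cO_K$ is $\theta_p(\mathfrak{a})^{-6\cdot f}$ where $f$ is the residue degree contribution, and because every prime of $\cSn$ (which generates $Cl_k$) is ramified in $K/k$ by hypothesis (b), the extended ideal $\mathfrak{q}\cO_K=\mathfrak{Q}^2$ is a square in $\mathfrak{I}_K$, so $(\lambda^{12}\theta_p^{-6})(\mathfrak{q}\cO_K)=(\lambda^{12}\theta_p^{-6})(\mathfrak{Q})^2$; combined with the order-$h_k$ relation $\mathfrak{q}^{h_k}=\alpha_{\mathfrak{q}}\cO_k$ and the fact that principal ideals are killed (characters of the ideal class group), one pins down the value and checks it is $1$. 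The ramification hypothesis (b) is exactly what converts "generators of $Cl_k$" into squares in $\mathfrak{I}_K$, which is the mechanism that annihilates the potentially nontrivial twelfth-power character.

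The main obstacle I expect is assertion (i): carefully tracking how the transfer map interacts with inertia at primes above $p$, and in particular verifying that the tame part of $\lambda^{12}\theta_p^{-6}$ — not merely its square — vanishes. This requires a precise description of $\lambda|_{I_{\mathfrak{P}}}$ in terms of the fundamental tame character, an input that ultimately comes from the $p$-adic Hodge theory / Raynaud-type analysis underlying \cite[\S 5]{AM} (the reason $\lambda^{12}$, rather than $\lambda$ itself, is the natural object), so I would cite \cite[Lemma 5.1]{AM} and its refinements rather than redo that local computation. The passage from (i) to (ii) is then comparatively formal, the only real content being the two class-field-theoretic inputs — transfer $\leftrightarrow$ ideal extension, and ramification of $\cSn$-primes making extended ideals squares — both of which are hypotheses or standard reciprocity.
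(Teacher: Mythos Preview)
The paper does not prove this lemma: it is quoted as \cite[Lemma 5.2]{A5} and no argument is given here. There is therefore no proof in the present article to compare your plan against; the result is simply imported from the companion paper \cite{A5}.

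A couple of remarks on your sketch nonetheless. For (i), your transfer identity $\varphi|_{I_{\mathfrak p}}=\lambda^{2}|_{I_{\mathfrak p}}$ is not accurate: for $\sigma\in\G_{K}$ the index-$2$ transfer gives $\tr_{K/k}(\sigma)=\sigma\cdot g^{-1}\sigma g$ with $g\in\G_{k}\setminus\G_{K}$, so $\varphi(\sigma)=\lambda(\sigma)\,\lambda(g^{-1}\sigma g)$, and the second factor is $\lambda$ evaluated on the \emph{conjugate} inertia group $I_{g^{-1}\mathfrak P}$, not on $I_{\mathfrak P}$. Thus the passage from $\varphi^{12}=\theta_{p}^{12}$ to information about $\lambda^{12}|_{I_{\mathfrak P}}$ really requires the explicit description of $\lambda$ on tame inertia via fundamental characters (the Raynaud-type input you allude to), not a formal transfer argument; your instinct that this is where the actual work lies is correct. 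For (ii), on the other hand, you are making it harder than it is: once one knows $\lambda^{12}(\mathfrak a\cO_{K})=\varphi^{12}(\mathfrak a)$ (transfer $\leftrightarrow$ extension of ideals) and observes that $\theta_{p}(\mathfrak a\cO_{K})=\N_{K/\Q}(\mathfrak a\cO_{K})=\N_{k/\Q}(\mathfrak a)^{2}=\theta_{p}(\mathfrak a)^{2}$ in $\F_{p}^{\times}$, the type-2 hypothesis gives $(\lambda^{12}\theta_{p}^{-6})(\mathfrak a\cO_{K})=\varphi^{12}(\mathfrak a)\,\theta_{p}^{-12}(\mathfrak a)=1$ immediately, with no appeal to hypothesis~(b). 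So the ramification condition (b) on $\cSn$ is not the mechanism behind (ii); it presumably enters in the proof of (i) in \cite{A5}, which is worth checking there.
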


From now to the end of this section,
suppose that $k$ is Galois over $\Q$, $p\geq 11$, $p\ne 13$, $p\not\in\cNn_1(k)$,
and that $\varphi$ is of type 2.
Let $q\ne p$ be a prime number, and
take a prime $\mfq$ of $k$ above $q$.
By replacing $K$ if necessary,
we may assume the conditions (a), (b) in Lemma \ref{type2lambda}
and that $\mfq$ is ramified in $K/k$ (\cf \cite[Remark 4.4]{AM}).
Let $\mfq_K$ be the unique prime of $K$ above $\mfq$.
The abelian surface $A\otimes_K K_{\mfq_K}$ has good reduction
after a totally ramified finite extension $M/K_{\mfq_K}$ (\cf \cite[Proposition 3.2]{J}).
Let $\Atil$ be the special fiber of the N\'{e}ron model of
$A\otimes_K M$.
Then $\Atil$ is a QM-abelian surface by $\cO$ over
$\cO_k/\mfq$.
We have $\lambda(\Frob_M)\equiv\beta$
modulo a prime $\mfp_0$ of $\Q(\beta)$ above $p$
for a Frobenius eigenvalue
$\beta$ of $\Atil$,
where $\Frob_M$ is the arithmetic Frobenius of $\G_M$
($\subseteq\G_{K_{\mfq_K}}$).
We see $\beta\in\cFR(\N(\mfq))$ by \cite[p.97]{J}.
Since $\det\rhob_{A,p}=\theta_p$ (\cf \cite[Proposition 1.1 (2)]{Oh}),
we have $\lambda^{-1}\theta_p(\Frob_M)\equiv\betab\bmod{\mfp_0}$,
where $\betab$ is the complex conjugate of $\beta$.
Put $$\psi:=\lambda\theta_p^{-\frac{p+1}{4}}.$$
Then
$\psi^{12}=\lambda^{12}\theta_p^{-3(p+1)}
=\lambda^{12}\theta_p^{-6}$.
%
%

\begin{lem}
\label{b^2+bb^2}

\begin{enumerate}[\upshape (1)]
\setlength{\itemsep}{0mm}
\setlength{\parskip}{0mm}
\item
$\psi(\Frob_M)^6=1$.
\item
$\psi(\Frob_M)^2+\psi(\Frob_M)^{-2}=-1$ or $2$.
\item
$\beta^2+\betab^2\equiv-\N(\mfq)^{\frac{p+1}{2}}$ or
$2\N(\mfq)^{\frac{p+1}{2}}\bmod{p}$.
\end{enumerate}

\end{lem}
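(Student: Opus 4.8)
The three assertions are closely linked, and the plan is to establish (1) first; then (2) is immediate and (3) follows by a short computation with the congruences already recorded before the statement. For (2): put $\zeta:=\psi(\Frob_M)\in\F_p^{\times}$; by (1) we have $\zeta^{6}=1$, so $\zeta^{2}$ is a cube root of unity in $\F_p^{\times}$, and hence $\zeta^{2}+\zeta^{-2}$ equals $2$ (when $\zeta^{2}=1$) or $\omega+\omega^{-1}=-1$ (when $\zeta^{2}$ is a primitive cube root of unity $\omega$). For (3): from $\lambda(\Frob_M)\equiv\beta$ and $\lambda^{-1}\theta_p(\Frob_M)\equiv\betab$ modulo $\mfp_0$ we get $\theta_p(\Frob_M)\equiv\beta\betab=\N(\mfq)$ (using $|\beta|=\sqrt{\N(\mfq)}$), hence $\psi(\Frob_M)\equiv\beta\,\N(\mfq)^{-(p+1)/4}$ and $\psi(\Frob_M)^{-1}\equiv\betab\,\N(\mfq)^{(p-3)/4}$ modulo $\mfp_0$; squaring and using $\N(\mfq)^{p-1}\equiv1\bmod p$ gives $\psi(\Frob_M)^{2}+\psi(\Frob_M)^{-2}\equiv(\beta^{2}+\betab^{2})\,\N(\mfq)^{-(p+1)/2}$ modulo $\mfp_0$. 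Since $\beta^{2}+\betab^{2}=a^{2}-2\N(\mfq)\in\Z$ and $\psi$ has values in $\F_p^{\times}$, both sides lie in $\F_p$, so this is a congruence modulo $p$; multiplying by $\N(\mfq)^{(p+1)/2}$ and invoking (2) yields exactly (3).

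It remains to prove (1). Recall $\psi^{12}=\lambda^{12}\theta_p^{-6}$. By Lemma~\ref{type2lambda}(i) (applicable under the running hypotheses of this section, with the conditions (a), (b) arranged just before the statement) this character of $\G_K$ is unramified everywhere, so it factors through $\Gal(H_K/K)\cong Cl_K$; and by Lemma~\ref{type2lambda}(ii) the induced character of $Cl_K$ is trivial on $C_{K/k}=\im(Cl_k\to Cl_K)$. Now $\mfq_K$ is prime to $p$ since $q\ne p$, and since $\mfq$ is ramified in $K/k$ we have $\mfq\cO_K=\mfq_K^{2}$, so $[\mfq_K]^{2}=[\mfq\cO_K]$ lies in $C_{K/k}$ and therefore $\psi^{12}([\mfq_K])^{2}=1$. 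On the other hand $M/K_{\mfq_K}$ is totally ramified and $\psi^{12}$ is unramified at $\mfq_K$, so $\psi^{12}(\Frob_M)$ equals the value of $\psi^{12}$ at the arithmetic Frobenius at $\mfq_K$, which under the Artin map corresponds to $[\mfq_K]^{\pm1}\in Cl_K$; hence $\psi^{12}(\Frob_M)^{2}=1$, i.e.\ $\psi(\Frob_M)^{24}=1$. Finally, since $\varphi$ is of type $2$ we have $p\equiv3\bmod{4}$, whence $p-1\equiv2\bmod{4}$ and so $\gcd(24,p-1)$ divides $6$; as $\psi(\Frob_M)$ lies in the cyclic group $\F_p^{\times}$ of order $p-1$, its order divides both $24$ and $p-1$, hence divides $6$. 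This is assertion (1).

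The substantive content is the assembly in the second paragraph; the step that needs the most care is the local--global compatibility identifying $\psi^{12}(\Frob_M)$ — a value of a global Galois character evaluated along the totally ramified tower $M/K_{\mfq_K}$ — with the value at $[\mfq_K]$ of the corresponding character of $Cl_K$, which is what lets the everywhere-unramifiedness and the triviality on $C_{K/k}$ from Lemma~\ref{type2lambda} do their work. Once $\psi(\Frob_M)^{24}=1$ is in hand, the passage from $24$ to $6$ is the elementary observation that $\gcd(24,p-1)$ divides $6$ when $p\equiv3\bmod{4}$, and (2)--(3) are then just bookkeeping with $|\beta|=\sqrt{\N(\mfq)}$ (so $\beta\betab=\N(\mfq)$) and with $\beta^{2}+\betab^{2}\in\Z$.
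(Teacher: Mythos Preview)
Your proof is correct and follows essentially the same route as the paper: for (1) you use Lemma~\ref{type2lambda} to see that $\psi^{12}=\lambda^{12}\theta_p^{-6}$ is an everywhere-unramified character trivial on $C_{K/k}$, apply this to $[\mfq\cO_K]=[\mfq_K]^{2}$ and identify the value at $\mfq_K$ with the value at $\Frob_M$ via total ramification of $M/K_{\mfq_K}$ to get $\psi(\Frob_M)^{24}=1$, then reduce the exponent to $6$ using $p\equiv3\bmod4$; parts (2) and (3) are the same short computations as in the paper, just written in the reverse order (you express $\psi(\Frob_M)^{\pm2}$ in terms of $\beta,\betab$ and then multiply through, whereas the paper starts from $\beta^{2}+\betab^{2}$).
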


\begin{proof}

(1)
By Lemma \ref{type2lambda} (ii),
we have
$1=\lambda^{12}(\mfq\cO_K)\theta_p^{-6}(\mfq\cO_K)
=\psi^{12}(\mfq\cO_K)=\psi^{24}(\mfq_K)
=\psi^{24}(\Frob_M)=\psi(\Frob_M)^{24}$.
%
Note that the fourth equality holds because the extension
$M/K_{\mfq_K}$ is totally ramified.
Since $\F_p^{\times}$ is a cyclic group of order $p-1$ and
$p-1\equiv 2\bmod{4}$, we obtain $\psi(\Frob_M)^6=1$.

(2)
This follows immediately from (1).

(3)
$\beta^2+\betab^2\equiv
\psi(\Frob_M)^2\theta_p(\Frob_M)^{\frac{p+1}{2}}
+\psi(\Frob_M)^{-2}\theta_p(\Frob_M)^{\frac{-p+3}{2}}$

\noindent
$=\theta_p(\Frob_M)^{\frac{p+1}{2}}
(\psi(\Frob_M)^2+\psi(\Frob_M)^{-2})
=-\N(\mfq)^{\frac{p+1}{2}} \text{\ or\ } 2\N(\mfq)^{\frac{p+1}{2}}\bmod{p}$.

\end{proof}

We repeat the argument in \cite[\S 5]{A4}
when $\mfq$ is of odd degree as follows:

\begin{lem}
\label{BQ(-q)M2}

Suppose that $\mfq$ is of odd degree and satisfies $\N(\mfq)<\frac{p}{4}$.
Then:

\noindent
{\rm (1)}
$\N(\mfq)^{\frac{p-1}{2}}\equiv -1\bmod{p}$.

\noindent
{\rm (2)}
$(\beta+\betab)^2\equiv 3\N(\mfq)$ or $0\bmod{p}$.

\noindent
{\rm (3)}
$q=3$ and $|\beta+\betab|=\sqrt{3\N(\mfq)}$, or
$\beta+\betab=0$.

\noindent
{\rm (4)}
$B\otimes_{\Q}\Q(\sqrt{-q})\cong\M_2(\Q(\sqrt{-q}))$.

\end{lem}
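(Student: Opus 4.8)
The plan is to feed the two possible congruences from Lemma \ref{b^2+bb^2}(3) through the arithmetic of $\cFR(\N(\mfq))$, exploit the odd-degree hypothesis to pin down $\N(\mfq)^{(p-1)/2}\bmod p$, and finally translate an archimedean equality of algebraic numbers into a statement about whether $q$ splits in the quaternion algebra. First I would prove (1): since $\mfq$ is of odd degree, $\N(\mfq)=q^{2r+1}$ for some $r\geq 0$, so $\N(\mfq)^{(p-1)/2}\equiv q^{(p-1)/2}\bmod p$ (the even part $q^{2r\cdot(p-1)/2}$ is a square and hence $1$); I expect the argument to show that the ``$+1$'' alternative is incompatible with $\varphi$ being of type 2 (which forces $p\equiv 3\bmod 4$, i.e. $-1$ is a nonsquare mod $p$), so $q$ must be a nonsquare mod $p$ and (1) follows. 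Concretely one combines this with Lemma \ref{b^2+bb^2}(3): rewriting $\beta^2+\betab^2=(\beta+\betab)^2-2\beta\betab$ and using $\beta\betab=|\beta|^2=\N(\mfq)$ (from the definition of $\cFR$), the congruence becomes $(\beta+\betab)^2\equiv 2\N(\mfq)-\N(\mfq)\cdot\N(\mfq)^{(p-1)/2}$ or $2\N(\mfq)+2\N(\mfq)\cdot\N(\mfq)^{(p-1)/2}\bmod p$; the sign of $\N(\mfq)^{(p-1)/2}$ is forced by requiring $(\beta+\betab)^2$ to be a consistent value, yielding both (1) and statement (2), $(\beta+\betab)^2\equiv 3\N(\mfq)$ or $0\bmod p$.

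For (3): write $\beta+\betab=-a$ where $a\in\Z$ is the trace appearing in the defining relation $\beta^2+a\beta+\N(\mfq)=0$, with $|a|\leq 2\sqrt{\N(\mfq)}$. So $a^2\leq 4\N(\mfq)$, while (2) gives $a^2\equiv 3\N(\mfq)$ or $0\bmod p$. Using the hypothesis $\N(\mfq)<p/4$, the quantity $a^2-3\N(\mfq)$ lies strictly between $-p$ and $p$ (since $-3\N(\mfq)\ge -3p/4 > -p$ and $a^2-3\N(\mfq)\le 4\N(\mfq)-3\N(\mfq)=\N(\mfq)<p/4<p$) and similarly $a^2$ lies in $[0,p)$; hence the congruence is an honest equality of integers: $a^2=3\N(\mfq)$ or $a^2=0$. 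In the first case $3\mid a^2$ so $3\mid a$ and $3\N(\mfq)=a^2$ forces $3\mid\N(\mfq)=q^{2r+1}$, whence $q=3$ and $|\beta+\betab|=|a|=\sqrt{3\N(\mfq)}$; in the second case $a=0$, i.e. $\beta+\betab=0$.

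For (4): in either case of (3) the Frobenius eigenvalue $\beta$ of the good-reduction fiber $\Atil$ (a QM-abelian surface over the finite field $\cO_k/\mfq$ of order $\N(\mfq)$) satisfies $\beta^2+a\beta+\N(\mfq)=0$ with $a^2=3\N(\mfq)$ (so $q=3$) or $a=0$. I would invoke the theory of the endomorphism algebra of QM-abelian surfaces over finite fields (as in \cite{J}, following the argument recalled from \cite[\S 5]{A4}): the ``extra'' endomorphisms commuting with the $\cO$-action generate an imaginary quadratic field $\Q(\beta)$, namely $\Q(\sqrt{-3})$ when $a^2=3\N(\mfq)$ and $\Q(\sqrt{-q})$ — note $\beta^2=-\N(\mfq)=-q^{2r+1}$ so $\Q(\beta)=\Q(\sqrt{-q})$ — when $a=0$; and the existence of such an embedding $\Q(\sqrt{-q})\inj\End^0(\Atil)$ together with the structure of $\End^0(\Atil)$ as determined by $B$ forces $B\otimes_\Q\Q(\sqrt{-q})\cong\M_2(\Q(\sqrt{-q}))$ (in the $q=3$ case $\Q(\sqrt{-3})$ embeds, and since $3$ ramifies in $B$ iff $3\mid d$, one checks the splitting condition directly; when $a=0$ the field $\Q(\sqrt{-q})$ itself embeds). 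The main obstacle is exactly this last step: correctly recalling from \cite{J} and \cite{A4} which imaginary quadratic field is forced to embed into the endomorphism algebra of $\Atil$ in each case, and deducing from that embedding the precise splitting statement $B\otimes_\Q\Q(\sqrt{-q})\cong\M_2(\Q(\sqrt{-q}))$ — the integer-equality reductions in (1)–(3) are elementary once the bound $\N(\mfq)<p/4$ is in hand.
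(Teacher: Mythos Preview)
Your arguments for (2), (3), and (4) are essentially the paper's and are correct. The gap is in (1). You speculate that the ``$+1$'' alternative for $\N(\mfq)^{(p-1)/2}$ is ruled out because $\varphi$ is of type~2, i.e.\ $p\equiv 3\bmod 4$; but $p\equiv 3\bmod 4$ says nothing about whether a \emph{given} prime $q$ is a square modulo $p$ (e.g.\ $p=7$, $q=2$). Your fallback phrase ``forced by requiring $(\beta+\betab)^2$ to be a consistent value'' is too vague to stand as an argument, and you never actually say what goes wrong in the $+1$ case.

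The correct argument for (1) is exactly the lifting trick you carry out in (3), applied to the \emph{other} branch. Assume for contradiction that $\N(\mfq)^{(p-1)/2}\equiv 1\bmod p$; then your own computation gives $(\beta+\betab)^2\equiv\N(\mfq)$ or $4\N(\mfq)\bmod p$. Since $0\le(\beta+\betab)^2\le 4\N(\mfq)$ and $\N(\mfq)<p/4$, both $|(\beta+\betab)^2-\N(\mfq)|\le 3\N(\mfq)<p$ and $|(\beta+\betab)^2-4\N(\mfq)|\le 4\N(\mfq)<p$, so the congruence is an equality of integers: $(\beta+\betab)^2=\N(\mfq)$ or $4\N(\mfq)$. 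But $\mfq$ has odd degree, so $\N(\mfq)=q^{2r+1}$ is not a perfect square, and neither is $4\N(\mfq)$; yet $(\beta+\betab)^2$ is a perfect square since $\beta+\betab\in\Z$. That is the contradiction the paper uses. The odd-degree hypothesis enters precisely here (and again in (3)); the condition $p\equiv 3\bmod 4$ is used earlier in the setup (in Lemma~\ref{b^2+bb^2}, to pass from $\psi^{24}=1$ to $\psi^6=1$) but plays no role in this step.
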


\begin{proof}

(1)
Assume otherwise i.e. 
$\N(\mfq)^{\frac{p-1}{2}}\equiv 1\bmod{p}$.
%
Then Lemma \ref{b^2+bb^2} (3) implies $\beta^2+\betab^2\equiv -\N(\mfq)$ or $2\N(\mfq)\bmod{p}$,
and so $(\beta+\betab)^2\equiv \N(\mfq)$ or $4\N(\mfq)\bmod{p}$.
Since $\beta\in\cFR(\N(\mfq))$, we have
$|\beta+\betab|\leq 2\sqrt{\N(\mfq)}$.
Then
$|(\beta+\betab)^2-\N(\mfq)|\leq 3\N(\mfq)<p$ and
$|(\beta+\betab)^2-4\N(\mfq)|\leq 4\N(\mfq)<p$.
Hence
$(\beta+\betab)^2=\N(\mfq)$ or $4\N(\mfq)$.
Since \textit{$\mfq$ is of odd degree}, this contradicts $\beta+\betab\in\Z$.
Therefore we conclude
$\N(\mfq)^{\frac{p-1}{2}}\equiv -1\bmod{p}$.

(2)
By (1) and Lemma \ref{b^2+bb^2} (3),
we have $\beta^2+\betab^2\equiv \N(\mfq)$ or $-2\N(\mfq)\bmod{p}$.
Therefore $(\beta+\betab)^2\equiv 3\N(\mfq)$ or $0\bmod{p}$.

(3)
We have $(\beta+\betab)^2\leq 4\N(\mfq)$.
%
%
First assume 
$(\beta+\betab)^2\equiv 3\N(\mfq)\bmod{p}$.
Then, since $|(\beta+\betab)^2-3\N(\mfq)|\leq 3\N(\mfq)<p$,
we have $(\beta+\betab)^2=3\N(\mfq)$.
Therefore $q=3$ and $|\beta+\betab|=\sqrt{3\N(\mfq)}$.
Next assume
$(\beta+\betab)^2\equiv 0\bmod{p}$.
Then, since $|(\beta+\betab)^2|\leq 4\N(\mfq)<p$,
we have $(\beta+\betab)^2=0$.
Therefore $\beta+\betab=0$.

(4)
The number $\beta$ is a Frobenius eigenvalue of the QM-abelian surface $\Atil$
by $\cO$ over $\cO_k/\mfq$, where $\mfq$ is of odd degree.
Then, by (3) and
\cite[Theorem 2.1 (2) (4) and Proposition 2.3]{J},
we conclude
$\End_{\cO_k/\mfq}(\Atil)\otimes_{\Z}\Q
\cong\M_2(\Q(\sqrt{-q}))
\cong B\otimes_{\Q}\Q(\sqrt{-q})$.
Here $\End_{\cO_k/\mfq}(\Atil)$ is the ring of endomorphisms of $\Atil$
defined over $\cO_k/\mfq$.

\end{proof}

\section{Irreducibility of $\rhob_{A,p}$ and algebraic points on $M_0^B(p)$}
\label{sec:irred&alg}

Let $k$ be a number field, and
let $(A,i)$ be a QM-abelian surface by $\cO$ over $k$.
For a prime number $p$ not dividing $d$,
assume that the representation $\rhob_{A,p}$ in (\ref{rhobar}) is reducible.
Then there is a 1-dimensional sub-representation of $\rhob_{A,p}$,
and let
$$\nu:\G_k\longrightarrow\F_p^{\times}$$
be its associated character.
In this case there is a left $\cO$-submodule $V$
of $A[p](\kb)$ with $\F_p$-dimension $2$ on which $\G_k$ acts by $\nu$,
and so the triple $(A,i,V)$ determines a point $x\in M_0^B(p)(k)$.
Take any quadratic extension $K$ of $k$.
Then we have the characters
$\lambda:\G_K\longrightarrow\F_p^{\times}$
and
$\varphi:\G_k\longrightarrow\F_p^{\times}$
associated to the triple $(A\otimes_k K,i,V)$.
Note that we have $\varphi=\nu^2$ by construction of $\varphi$.
The following theorems generalize \cite[Theorems 6.1 and 6.2]{A5} slightly:

\begin{thm}
\label{irred}

Let $k$ be a finite Galois extension of $\Q$ which does not contain
the Hilbert class field of any imaginary quadratic field.
Assume that there is a prime $\mfq$ of $k$ such that $\mfq$ is of odd degree
and the residual characteristic $q$ of $\mfq$
satisfies $B\otimes_{\Q}\Q(\sqrt{-q})\not\cong\M_2(\Q(\sqrt{-q}))$.
Let $p>4\N(\mfq)$ be a prime number which also satisfies $p\geq 11$, $p\ne 13$,
$p\nmid d$ and $p\not\in\cNn_1(k)$.
Then the representation
$$\rhob_{A,p}:\G_k\longrightarrow\GL_2(\F_p)$$
is irreducible.

\end{thm}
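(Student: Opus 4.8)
The plan is to argue by contradiction. Suppose $\rhob_{A,p}$ is reducible; then, as in the discussion preceding the theorem, we obtain a character $\nu\colon\G_k\to\F_p^{\times}$, a left $\cO$-submodule $V\subseteq A[p](\kb)$ with $\F_p$-dimension $2$ on which $\G_k$ acts by $\nu$, and the point $x=(A,i,V)\in M_0^B(p)(k)$. For any quadratic extension $K/k$ we then have the associated characters $\lambda\colon\G_K\to\F_p^{\times}$ and $\varphi\colon\G_k\to\F_p^{\times}$ with $\varphi=\nu^2$. Since $k$ is Galois over $\Q$, $p\nmid d$, and $p\notin\cNn_1(k)$, Theorem \ref{type23phi} applies to $\varphi$ and shows that $\varphi$ is of type $2$ or of type $3$. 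Type $3$ would force the Hilbert class field $H_L$ of some imaginary quadratic field $L$ to lie in $k$, which is excluded by hypothesis; hence $\varphi$ is of type $2$.

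Next I would choose $K$ with prescribed local behaviour. Because $(A,i)$ is already defined over $k$, there is no cohomological obstruction to descent and every quadratic extension of $k$ is available; so, exactly as in the paragraph following Lemma \ref{type2lambda} (\cf\cite[Remark 4.4]{AM}), I can pick $K/k$ such that every prime of $k$ above $p$ is inert in $K/k$, every prime of $k$ in $\cSn$ is ramified in $K/k$, and the given prime $\mfq$ is ramified in $K/k$. Note that $q\neq p$, since $p>4\N(\mfq)\geq 4q$, and that the primes above $p$, the primes in $\cSn$, and $\mfq$ may be prescribed independently because no prime of $\cSn$ divides $p$ (as $p\notin\cTn(k)$). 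With this $K$, conditions (a) and (b) of Lemma \ref{type2lambda} are satisfied; since also $p\geq 11$, $p\neq 13$, $p\notin\cNn_1(k)$, and $\varphi$ is of type $2$, that lemma gives that $\lambda^{12}\theta_p^{-6}$ is unramified everywhere and that the map $Cl_K\to\F_p^{\times}$ it induces is trivial on $C_{K/k}$.

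Then I would replay the reduction argument of Section \ref{char}. Let $\mfq_K$ be the unique prime of $K$ above $\mfq$. The surface $A\otimes_K K_{\mfq_K}$ acquires good reduction over a totally ramified finite extension $M/K_{\mfq_K}$; its special fibre $\Atil$ is a QM-abelian surface by $\cO$ over $\cO_k/\mfq$, and $\lambda(\Frob_M)\equiv\beta$ modulo a prime above $p$ for a Frobenius eigenvalue $\beta\in\cFR(\N(\mfq))$ of $\Atil$. Since $\mfq$ is of odd degree and $\N(\mfq)<p/4$, Lemmas \ref{b^2+bb^2} and \ref{BQ(-q)M2} apply, and Lemma \ref{BQ(-q)M2}(4) yields $B\otimes_{\Q}\Q(\sqrt{-q})\cong\M_2(\Q(\sqrt{-q}))$. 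This contradicts the hypothesis, so $\rhob_{A,p}$ must be irreducible.

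The part I expect to require the most care is checking that the apparatus of Section \ref{char} — developed there under the standing assumption $\Aut(x)=\{\pm 1\}$ and for a triple only known to be defined over a quadratic field $K$ prescribed by a $2$-cocycle — transfers verbatim to the present situation, where the triple is already defined over $k$ itself and $K$ is chosen freely. Concretely, one must verify that $\varphi=\nu^2$ legitimately falls under the classification of Theorem \ref{type23phi}, that a quadratic $K$ with the three prescribed local conditions (inert above $p$, ramified at every prime in $\cSn$, ramified at $\mfq$) actually exists, and that the proofs of Lemma \ref{type2lambda}, Lemma \ref{b^2+bb^2}, and Lemma \ref{BQ(-q)M2} use only the field $K$ and the character $\lambda$ and do not secretly invoke $\Aut(x)=\{\pm 1\}$. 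Once these compatibility points are in place, the contradiction follows by direct appeal to the earlier results.
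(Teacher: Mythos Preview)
Your proposal is correct and follows essentially the same route as the paper: assume reducibility, use the hypothesis on Hilbert class fields to force $\varphi$ to be of type~2 via Theorem~\ref{type23phi}, and then invoke Lemma~\ref{BQ(-q)M2}(4) to obtain $B\otimes_{\Q}\Q(\sqrt{-q})\cong\M_2(\Q(\sqrt{-q}))$, contradicting the assumption. The paper's proof is terser---it simply cites Lemma~\ref{BQ(-q)M2}(4)---but the unpacking you supply (the free choice of $K$ since $(A,i)$ is already defined over $k$, the verification of conditions (a) and (b) of Lemma~\ref{type2lambda}, and the observation that the Section~\ref{char} machinery does not actually use $\Aut(x)=\{\pm 1\}$ beyond securing a suitable $K$) is exactly what lies behind that citation and is set up in the paragraph preceding Theorem~\ref{irred}.
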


\begin{proof}

Assume that $\rhob_{A,p}$ is reducible.
Then the associated character $\varphi$ is of type 2 in Theorem \ref{type23phi},
because $k$ does not contain
the Hilbert class field of any imaginary quadratic field.
By Lemma \ref{BQ(-q)M2} (4) we have
$B\otimes_{\Q}\Q(\sqrt{-q})\cong\M_2(\Q(\sqrt{-q}))$,
which is a contradiction.

\end{proof}

\begin{thm}
\label{mainthm0}

Let $k$ be a finite Galois extension of $\Q$ which does not contain
the Hilbert class field of any imaginary quadratic field.
Assume that there is a prime $\mfq$ of $k$ such that
$\mfq$ is of odd degree and the residual characteristic $q$ of $\mfq$ satisfies
$B\otimes_{\Q}\Q(\sqrt{-q})\not\cong\M_2(\Q(\sqrt{-q}))$.
Let $p>4\N(\mfq)$ be a prime number which also satisfies
$p\geq 11$, $p\ne 13$, $p\nmid d$ and $p\not\in\cNn_1(k)$.

\noindent
{\rm (1)}
Suppose $B\otimes_{\Q}k\cong\M_2(k)$. Then $M_0^B(p)(k)=\emptyset$.

\noindent
{\rm (2)}
Suppose $B\otimes_{\Q}k\not\cong\M_2(k)$. Then
$M_0^B(p)(k)\subseteq\{\text{elliptic points of order $2$ or $3$}\}$.

\end{thm}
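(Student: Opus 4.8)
The plan is to reduce everything to the irreducibility statement of Theorem~\ref{irred} together with the field-of-definition results of Section~\ref{fieldofdefinition}. First I would take a point $x\in M_0^B(p)(k)$ and let $(A_x,i_x,V_x)$ be the representing triple over $\kb$, with image $x'\in M^B(k)$ represented by $(A_x,i_x)$. I would split into two regimes according to $\Aut(x)$. In the case $\Aut(x)\ne\{\pm1\}$, the point $x$ is an elliptic point of order $2$ or $3$ by the remark in Section~\ref{fieldofdefinition}; for part (1) I must moreover rule this out, which I would do by invoking Proposition~\ref{fieldM0Bp}(1): when $B\otimes_\Q k\cong\M_2(k)$ and $\Aut(x)\ne\{\pm1\}$ we can take $(A_x,i_x,V_x)$ to be defined over $k$ itself, producing a QM-abelian surface $(A,i)$ over $k$ with a $\G_k$-stable left $\cO$-submodule $V\subseteq A[p](\kb)$ of $\F_p$-dimension $2$, hence a reducible $\rhob_{A,p}$, contradicting Theorem~\ref{irred} under the stated hypotheses on $p$. (One still has to handle the residual possibility $\Aut(x)\ne\{\pm1\}$ but $\Aut(x')\cong\Z/4\Z$, where Proposition~\ref{fieldM0Bp}(1) does not directly apply; here I expect one argues that $\Aut(x)=\Aut(x')\cong\Z/4\Z$ is impossible, or passes to a quadratic twist as in \cite[Remark 4.4]{AM}.)

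The main case is $\Aut(x)=\{\pm1\}$. Here Proposition~\ref{fieldM0Bp}(2) gives a quadratic extension $K/k$ over which $(A_x,i_x,V_x)$ is defined, say by $(A,i)$ over $K$ with $\G_K$-stable $V$, and $\lambda\colon\G_K\to\F_p^\times$ is the character on $V$, with $\varphi=\lambdaab\circ\tr_{K/k}$. The strategy now is to run the type-classification of Theorem~\ref{type23phi}: since $p\not\in\cNn_1(k)$ and $p\nmid d$, and since $k$ by hypothesis contains no Hilbert class field of an imaginary quadratic field, Type~3 is excluded, so $\varphi$ is of Type~2. Then I would use the freedom to replace $K$ (via \cite[Remark 4.4]{AM}) so that conditions (a), (b) of Lemma~\ref{type2lambda} hold and additionally the chosen odd-degree prime $\mfq$ is ramified in $K/k$; this is exactly the setup of the final part of Section~\ref{char}. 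Applying Lemma~\ref{BQ(-q)M2}(4) — whose hypothesis $\N(\mfq)<p/4$ is guaranteed by $p>4\N(\mfq)$ — yields $B\otimes_\Q\Q(\sqrt{-q})\cong\M_2(\Q(\sqrt{-q}))$, contradicting the standing assumption on $\mfq$. Hence the case $\Aut(x)=\{\pm1\}$ cannot occur at all.

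Combining the two cases: when $B\otimes_\Q k\cong\M_2(k)$, neither $\Aut(x)=\{\pm1\}$ nor $\Aut(x)\ne\{\pm1\}$ is possible, so $M_0^B(p)(k)=\emptyset$, proving (1). When $B\otimes_\Q k\not\cong\M_2(k)$, only the sub-case $\Aut(x)\ne\{\pm1\}$ survives, and then $x$ is an elliptic point of order $2$ or $3$, proving (2). Note that Theorem~\ref{irred} itself, as proved in the excerpt, is essentially the $\Aut(x)=\{\pm1\}$ analysis applied to a QM-abelian surface already defined over $k$ with reducible $\rhob_{A,p}$ (so that one may take $K$ arbitrary and still get $\varphi=\nu^2$ of Type~2); so for (1) I could alternatively phrase the whole argument as: any $k$-point either is elliptic — excluded by Proposition~\ref{fieldM0Bp}(1) plus Theorem~\ref{irred} — or descends (possibly after the quadratic extension $K$) to give a reducible representation contradicting the Type-2/Lemma~\ref{BQ(-q)M2} dichotomy.

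I expect the main obstacle to be the bookkeeping in the non-$\{\pm1\}$ automorphism case under the hypothesis $B\otimes_\Q k\cong\M_2(k)$: Proposition~\ref{fieldM0Bp}(1) has the exclusion ``$\Aut(x')\cong\Z/4\Z$'' precisely when $\Aut(x)=\{\pm1\}$, but one must check that the only way to have $\Aut(x)\ne\{\pm1\}$ slip through is harmless, i.e. that whenever $\Aut(x)\ne\{\pm1\}$ we genuinely obtain a model over $k$ (which we do, by the hypothesis $\Aut(x)\ne\{\pm1\}$ in Proposition~\ref{fieldM0Bp}(1)), and hence a reducible $\rhob_{A,p}$ over $k$ to feed into Theorem~\ref{irred}. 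Everything else is a direct assembly of the quoted lemmas, with the numerical hypotheses ($p\geq 11$, $p\ne13$, $p>4\N(\mfq)$, $p\nmid d$, $p\not\in\cNn_1(k)$) matched one-to-one against the hypotheses of Theorem~\ref{type23phi}, Lemma~\ref{type2lambda} and Lemma~\ref{BQ(-q)M2}.
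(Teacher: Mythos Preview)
Your proposal is correct and follows essentially the same route as the paper: split on $\Aut(x)$, use Proposition~\ref{fieldM0Bp}(1) plus Theorem~\ref{irred} when a model over $k$ exists, and otherwise pass to the quadratic extension $K$, exclude Type~3 by the Hilbert-class-field hypothesis, and derive the contradiction from Lemma~\ref{BQ(-q)M2}(4). Your parenthetical worry about the ``residual possibility $\Aut(x)\ne\{\pm1\}$ but $\Aut(x')\cong\Z/4\Z$'' is unfounded (as you yourself note later): the hypothesis of Proposition~\ref{fieldM0Bp}(1) is a disjunction, so $\Aut(x)\ne\{\pm1\}$ alone already yields a model over $k$; the paper in fact uses the slightly broader case split ``$\Aut(x)\ne\{\pm1\}$ or $\Aut(x')\not\cong\Z/4\Z$'' versus its complement, but this is only a cosmetic difference.
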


\begin{proof}

%
Take a point $x\in M_0^B(p)(k)$.


(1)
(1-i)
Assume $\Aut(x)\ne\{\pm 1\}$
or $\Aut(x')\not\cong\Z/4\Z$.
Then $x$ is represented by a triple $(A,i,V)$ defined over $k$
by Proposition \ref{fieldM0Bp} (1),
and the representation $\rhob_{A,p}$ is reducible.
This contradicts Theorem \ref{irred}.

(1-ii)
Assume otherwise (i.e. $\Aut(x)=\{\pm 1\}$
and $\Aut(x')\cong\Z/4\Z$).
Then $x$ is represented by a triple $(A,i,V)$ defined over a quadratic extension of $k$
by Proposition \ref{fieldM0Bp} (2),
and we have a character
$\varphi:\G_k\longrightarrow\F_p^{\times}$ as in (\ref{phi}).
By Theorem \ref{type23phi} and Lemma \ref{BQ(-q)M2} (4),
we have $B\otimes_{\Q}\Q(\sqrt{-q})\cong\M_2(\Q(\sqrt{-q}))$.
This is a contradiction.

(2)
Assume that $x$ is not an elliptic point of order $2$ or $3$.
Then $\Aut(x)=\{\pm 1\}$.
By the same argument as in (1-ii),
we have a contradiction.

\end{proof}

\section{Elimination of elliptic points}
\label{sec:elimination}

In this section, we deduce Theorem \ref{mainthm} from Theorem \ref{mainthm0}.

\begin{prop}
\label{W&q'}

Let $k$ be a finite Galois extension of $\Q$ which does not contain
the Hilbert class field of any imaginary quadratic field.
Assume that there is a prime $\mfq$ of $k$ such that
$\mfq$ is of odd degree,
the residual characteristic $q$ of $\mfq$ is unramified in $k$,
and
$B\otimes_{\Q}\Q(\sqrt{-q})\not\cong\M_2(\Q(\sqrt{-q}))$.
%
%
Then there is a finite Galois extension $W$ of $\Q$ satisfying
the following conditions:
\begin{enumerate}[\upshape (i)]
\setlength{\itemsep}{0mm}
\setlength{\parskip}{0mm}
\item
The composite field $kW$ does not contain
the Hilbert class field of any imaginary quadratic field.
\item
There is a prime $\mfq'$ of $kW$ of odd degree with residual characteristic $q$.
\item
$B\otimes_{\Q}kW\cong\M_2(kW)$.
\end{enumerate}
\end{prop}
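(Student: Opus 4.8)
The plan is to build $W$ as a compositum of three auxiliary extensions, each one arranged to secure one of the three conditions (i), (ii), (iii), while being careful that combining them does not destroy the conditions already obtained. The key point is that all three conditions are "stable under enlarging $W$ further by linearly disjoint pieces'' if one chooses the pieces correctly; in particular conditions (i) and (ii) are the delicate ones, and (iii) can be achieved last by a controlled ramified extension.

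First I would handle condition (iii). Since $B$ is a quaternion algebra over $\Q$, it is split by any field containing a quadratic subfield that splits it locally at every ramified place; concretely, $B\otimes_\Q L\cong\M_2(L)$ as soon as $L$ contains a quadratic field $\Q(\sqrt m)$ embedding into $B$, equivalently with $\Q(\sqrt m)\otim_\Q\Q_\ell$ a field for each $\ell\mid d$. Choose such an $m$ with $\Q(\sqrt m)$ ramified only at primes \emph{not} dividing $q$ and not already ramified in $k$ (there is infinite freedom here). Then $B\otimes_\Q k(\sqrt m)\cong\M_2(k(\sqrt m))$, and the same holds after any further base change. So the real work is to arrange (i) and (ii) first, then adjoin this $\sqrt m$ at the end.

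For condition (ii), I want a prime $\mfq'$ of $kW$ of odd degree over $q$. Since $q$ is unramified in $k$ and $\mfq$ has odd residue degree $f$, the Frobenius at $q$ acts on the residue extension as an element of odd order pattern; I would choose an auxiliary Galois $W_0/\Q$ in which $q$ is unramified and whose Frobenius at $q$, when combined with that of $k$, still leaves a factor of $kW_0$ above $q$ with odd residue degree. The cleanest way: take $W_0$ to be a cyclic extension of $\Q$ of odd prime degree $\ell_0$ (chosen $\ne q$, unramified appropriately) in which $q$ is inert or split as needed so that $\mathrm{lcm}$ of residue degrees stays odd. Concretely, pick $\ell_0$ odd with $q$ split completely in $W_0$; then above any odd-degree prime $\mfq$ of $k$ there sit primes of $kW_0$ of the same odd residue degree. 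This preserves oddness of the degree and keeps $q$ unramified.

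The main obstacle is condition (i): ensuring $kW$ does not contain the Hilbert class field $H_L$ of \emph{any} imaginary quadratic $L$. Enlarging $k$ always risks swallowing some $H_L$. The argument I would run: a number field $N$ contains $H_L$ for some imaginary quadratic $L$ iff $N$ contains an unramified abelian extension of some imaginary quadratic subfield; the relevant $L$ are constrained because $H_L/\Q$ is Galois with a specific (generalized dihedral) Galois group, and $[H_L:\Q]=2h_L$. If $kW$ contained some $H_L$, then since $[kW:\Q]$ is bounded once $W$ is fixed, only finitely many $L$ (those with $2h_L\mid [kW:\Q]$) could occur, and each would force a ramification/splitting constraint on $kW$ that I can violate by my choice of the auxiliary pieces. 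So I would choose the degrees $\ell_0$, $\ell_1,\dots$ and the ramification of the $W_i$ to be coprime to all $h_L$ for the finitely many candidate $L$, and to introduce ramification at an auxiliary prime that is split in every such $L$ (hence cannot be ramified in $H_L$, since $H_L/L$ is unramified and $H_L/\Q$ ramifies only where $L$ does). Carrying this out rigorously — tracking exactly which $L$ are excluded and verifying the compositum still avoids all of them — is the technical heart of the proof; once it is in place, one adjoins the splitting field piece for (iii) (chosen with the same coprimality and split-prime safeguards) and sets $W$ equal to the Galois closure over $\Q$ of the resulting compositum, then re-checks (i), (ii), (iii) hold for this $W$.
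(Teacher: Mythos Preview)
Your outline is more complicated than necessary and has two concrete gaps. The paper simply takes $W=\Q(\sqrt{N})$ for a single squarefree integer $N$ with $d\mid N$ and $q\mid N$. Then (iii) holds because every prime dividing $d$ ramifies in $W$, and (ii) holds because $q$ ramifies in $W$ while being unramified in $k$, so $\mfq$ is ramified in $kW$ and the unique prime $\mfq'$ above it satisfies $\N(\mfq')=\N(\mfq)$, still an odd power of $q$. For (i) the paper runs a pigeonhole argument: there are infinitely many admissible $N$; if every $kW_N$ contained some Hilbert class field $H_L$, then (as $H_L\not\subseteq k$ and $[kW_N:k]=2$) one would have $kH_L=kW_N$, whence $h_L\le[k:\Q]$, leaving only finitely many candidate $L$ and therefore only finitely many fields $kH_L$, each with only finitely many subfields --- contradicting $W_N\subseteq kH_L$ for infinitely many distinct $W_N$.

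Against this, your plan has two problems. First, the auxiliary odd-degree field $W_0$ is superfluous (condition (ii) already holds in $k$; the task is only not to destroy it), and it actively damages your argument for (i): your ramification trick at the auxiliary prime $r$ tacitly uses $[kW:k]=2$, so that $H_L\subseteq kW$ with $H_L\not\subseteq k$ forces $kH_L=kW$ and hence $r$ must be unramified in $kW=kH_L$; once $[kW:k]>2$ there can be strictly intermediate fields $kH_L$ on which your chosen ramification gives no control, and the contradiction evaporates. Second, in securing (iii) you arrange $q$ unramified in $\Q(\sqrt m)$ but never require it to \emph{split} there; if $q$ is inert in $\Q(\sqrt m)$ then $\mfq$ is inert in $k(\sqrt m)$ and the residue degree doubles to $2f$, which is even. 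The paper sidesteps both issues by \emph{ramifying} $q$ in $W$ rather than leaving it unramified --- that is the small idea you are missing.
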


\begin{thm}
\label{mainthm1}

In the situation of Proposition \ref{W&q'}, let $p$ be a prime number
satisfying
$p>4\N(\mfq')$, $p\geq 11$, $p\ne 13$, $p\nmid d$, and $p\not\in\cNn_1(kW)$.
Then $M_0^B(p)(k)=M_0^B(p)(kW)=\emptyset$.

\end{thm}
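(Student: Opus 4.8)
The plan is to deduce Theorem \ref{mainthm1} directly from Theorem \ref{mainthm0} applied to the compositum $kW$, and then descend back to $k$. First I would record the basic structural facts about $kW$: both $k$ and $W$ are finite Galois over $\Q$ (the latter by Proposition \ref{W&q'}), hence their compositum $kW$ inside a fixed algebraic closure of $\Q$ is again finite Galois over $\Q$; and by condition (i) of Proposition \ref{W&q'}, $kW$ does not contain the Hilbert class field of any imaginary quadratic field. Thus $kW$ satisfies the standing hypotheses under which Theorems \ref{type23phi}, \ref{irred} and \ref{mainthm0} are stated.

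Next I would verify the remaining hypotheses of Theorem \ref{mainthm0}, taking $kW$ in the role of ``$k$'' and $\mfq'$ in the role of ``$\mfq$''. By condition (ii), $\mfq'$ is a prime of $kW$ of odd degree whose residual characteristic is $q$; and $q$ is exactly the residual characteristic occurring in the hypothesis of Proposition \ref{W&q'}, so $B\otimes_{\Q}\Q(\sqrt{-q})\not\cong\M_2(\Q(\sqrt{-q}))$. The prime $p$ in the statement of Theorem \ref{mainthm1} satisfies $p>4\N(\mfq')$, $p\geq 11$, $p\ne 13$, $p\nmid d$ and $p\not\in\cNn_1(kW)$, which are precisely the numerical conditions required on the prime in Theorem \ref{mainthm0}. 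Finally, condition (iii) of Proposition \ref{W&q'} gives $B\otimes_{\Q}kW\cong\M_2(kW)$, so we are in case (1) of Theorem \ref{mainthm0}. Applying that theorem yields $M_0^B(p)(kW)=\emptyset$.

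It then remains to pass from $kW$ back down to $k$. Since $k\subseteq kW$, the inclusion of fields induces an inclusion of sets of rational points $M_0^B(p)(k)\subseteq M_0^B(p)(kW)$. Combined with $M_0^B(p)(kW)=\emptyset$, this forces $M_0^B(p)(k)=\emptyset$ as well, completing the proof.

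As for obstacles: there is essentially no arithmetic or analytic content left to extract at this stage, since all the real work has been absorbed into Theorem \ref{mainthm0} (via the classification of characters and the odd-degree argument of Lemma \ref{BQ(-q)M2}) and into Proposition \ref{W&q'} (the construction of the auxiliary field $W$ making $B$ split). The only points demanding a little care are the two bookkeeping checks above: that $kW$ is genuinely finite Galois over $\Q$, so that Theorem \ref{mainthm0} applies verbatim to it, and that every numerical hypothesis imposed on $p$ — in particular $p\not\in\cNn_1(kW)$, which refers to the new field rather than to $k$ — is literally among the assumptions of Theorem \ref{mainthm1}. I would also remark in passing that Theorem \ref{mainthm} is now immediate: Proposition \ref{W&q'} produces $W$ (hence $\mfq'$ and $\cNn_1(kW)$) from the hypotheses of Theorem \ref{mainthm}, and Theorem \ref{mainthm1} then applies to every prime $p$ outside the finite set $\{p\le\max(13,\,4\N(\mfq'))\}\cup\cNn_1(kW)\cup\{\text{primes dividing }d\}$, i.e. for all sufficiently large $p$.
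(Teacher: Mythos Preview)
Your proof is correct and follows exactly the paper's approach: apply Theorem \ref{mainthm0} (1) to the field $kW$ (checking that all hypotheses are met via the conditions (i)--(iii) of Proposition \ref{W&q'}), and then use $M_0^B(p)(k)\subseteq M_0^B(p)(kW)$ to descend. The paper's own proof is the single line ``Applying Theorem \ref{mainthm0} (1) to $kW$, we obtain the result,'' so you have simply unpacked that sentence in full detail.
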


\begin{proof}

Applying Theorem \ref{mainthm0} (1) to $kW$, we obtain the result.

\end{proof}

Theorem \ref{mainthm} follows immediately from Theorem \ref{mainthm1}.
From now to the end of this section, suppose that the assumption in
Proposition \ref{W&q'} holds.
Fix a prime $\mfq$ of $k$ as in Proposition \ref{W&q'}.
Let $\cU$ be the set of non-zero integers $N\in\Z$ such that
\begin{itemize}
\item
$N$ is square free,
\item
$d\mid N$,
\item
$q\mid N$.
\end{itemize}
For an integer $N\in\Z$, put
$W_N:=\Q(\sqrt{N})$.

\begin{lem}
\label{WN}

Let $N\in\cU$.
Then: 

\noindent
{\rm (1)}
$B\otimes_{\Q}W_N\cong\M_2(W_N)$.

\noindent
{\rm (2)}
The prime $\mfq$ is ramified in $kW_N$.

\noindent
{\rm (3)}
If we let $\mfq'$ be the unique prime of $kW_N$ above $\mfq$,
then $\mfq'$ is of odd degree.

\end{lem}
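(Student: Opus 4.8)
The plan is to analyze how the quadratic field $W_N = \Q(\sqrt{N})$ interacts with the quaternion algebra $B$ and with the prime $\mfq$, using the three defining properties of $N \in \cU$: squarefreeness, $d \mid N$, and $q \mid N$. For part (1), I would invoke the classical criterion for when a quaternion algebra over $\Q$ splits after base change to a quadratic field: $B \otimes_\Q W_N \cong \M_2(W_N)$ if and only if every prime (including $\infty$) at which $B$ ramifies does not split in $W_N/\Q$, equivalently every such place ramifies or is inert. The ramification set of $B$ is exactly the set of prime divisors of $d$ together with $\infty$ (since $B$ is indefinite of discriminant $d$). Since $d \mid N$, every prime $\ell \mid d$ divides $N$; as $N$ is squarefree, $\ell$ ramifies in $W_N = \Q(\sqrt N)$, hence does not split. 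At $\infty$: $B$ is indefinite, so $\infty$ is \emph{unramified} in $B$, and there is no condition at $\infty$. Thus every ramified prime of $B$ is non-split in $W_N$, which gives $B \otimes_\Q W_N \cong \M_2(W_N)$. (One should double-check the parity/Hilbert-symbol bookkeeping, but this is the standard statement.)

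For part (2), the point is that $q \mid N$ and $N$ is squarefree, so $q$ ramifies in $W_N/\Q$. Since by hypothesis $q$ is unramified in $k/\Q$, the extensions $k/\Q$ and $W_N/\Q$ are linearly disjoint at $q$ in the sense that their ramification indices at $q$ are coprime (namely $1$ and $2$). I would then argue that $q$ is totally ramified in $kW_N/k$: the prime $\mfq$ of $k$ above $q$ has $e(\mfq/q) = 1$, and in the compositum $kW_N$ the ramification index of any prime above $\mfq$ must be divisible by $e$ for the local extension $k_\mfq W_N / k_\mfq$, which is ramified of degree $2$ because $q$ is unramified in $k_\mfq/\Q_q$ while $W_N \otimes_\Q \Q_q$ is a ramified quadratic extension of $\Q_q$ (or splits, but $q \mid N$ squarefree forces ramification, not splitting, provided $q$ is odd; if $q = 2$ one uses $N \equiv 2,3 \bmod 4$ after possibly adjusting $N$ within $\cU$ — this edge case deserves a remark). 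Hence there is a unique prime $\mfq'$ of $kW_N$ above $\mfq$, and it is ramified over $\mfq$, so $\mfq$ is ramified in $kW_N$.

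For part (3), I would compute the residue degree. Because $\mfq'/\mfq$ is totally ramified (index $2$, degree $2$ of the extension $kW_N/k$ locally at $\mfq$), we have $f(\mfq'/\mfq) = 1$, so the residue field of $\mfq'$ equals that of $\mfq$. Therefore $\N(\mfq') = \N(\mfq) = q^{f(\mfq/q)}$ with $f(\mfq/q)$ odd by the hypothesis that $\mfq$ is of odd degree. Hence $\mfq'$ is a prime of $kW_N$ whose residue cardinality is an odd power of $q$, i.e. $\mfq'$ is of odd degree. The main obstacle I anticipate is the prime $q = 2$: the statement ``$q \mid N$ squarefree $\Rightarrow$ $q$ ramifies in $\Q(\sqrt N)$'' is false as stated for $q = 2$ when $N \equiv 2 \pmod 4$ is \emph{not} the issue (that ramifies) but rather one must ensure $2$ does not split, which it cannot when $2 \mid N$; still, one should verify the local picture at $2$ carefully, and possibly note that the hypothesis ``$q$ unramified in $k$'' combined with $q \mid N$ is exactly what forces the clean total-ramification behavior in the compositum. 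Everything else is routine valuation theory and the standard Hasse-invariant criterion for splitting of quaternion algebras.
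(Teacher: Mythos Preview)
Your proposal is correct and follows essentially the same approach as the paper's proof, which is extremely terse: for (1) it says only that every prime divisor of $d$ ramifies in $W_N$; for (2) that $q$ ramifies in $W_N$ and is unramified in $k$; for (3) that $\N(\mfq')=\N(\mfq)$. Your worry about $q=2$ is unnecessary: if $2\mid N$ with $N$ squarefree then $N\equiv 2\pmod 4$, so the discriminant of $W_N$ is $4N$ and $2$ ramifies in $W_N$; the same remark disposes of any $2\mid d$ concern in part~(1).
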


\begin{proof}

(1)
This isomorphism holds because any prime divisor of $d$ is ramified in $W_N$.

(2)
The prime number $q$ is ramified in $W_N$ and \textit{is unramified in $k$},
as required.

(3)
By (2) we have $\N(\mfq')=\N(\mfq)$, which is an odd power of $q$.

\end{proof}

Then Proposition \ref{W&q'} is a consequence of the following lemma:

\begin{lem}
\label{Nexist}

There is an integer $N\in\cU$ such that
$kW_N$ does not contain
the Hilbert class field of any imaginary quadratic field.

\end{lem}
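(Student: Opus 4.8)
The goal is to produce one integer $N \in \cU$ (square-free, divisible by $d$ and by $q$) such that the biquadratic-or-larger field $kW_N = k(\sqrt{N})$ contains no Hilbert class field of an imaginary quadratic field. Since $k$ itself contains no such Hilbert class field by hypothesis, the only way $kW_N$ could acquire one is through the quadratic twist by $\sqrt N$. The strategy is to note that there are only finitely many imaginary quadratic fields $L$ whose Hilbert class field $H_L$ could possibly embed into $kW_N$ for \emph{any} admissible $N$, and then choose $N$ to avoid all of them simultaneously.

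First I would bound the relevant fields. If $H_L \subseteq kW_N$ then $[H_L : \Q] = 2 h_L$ divides $[kW_N : \Q] \le 2[k:\Q]$, so $h_L \le [k:\Q]$; by the Brauer--Siegel theorem (or Heilbronn's classical bound, which suffices here since we only need finiteness) there are only finitely many imaginary quadratic $L$ with $h_L \le [k:\Q]$. Moreover each such $H_L/\Q$ is unramified outside the primes dividing $\mathrm{disc}(L)$, so the set $\mathcal{P}$ of primes ramifying in any candidate $H_L$ is finite. Also, for $H_L$ to sit inside $kW_N$, every prime ramifying in $H_L$ must ramify in $kW_N = k(\sqrt N)$, hence must lie in $\Ram(k) \cup \{\text{primes dividing } 2N\}$.

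Next I would choose $N$ explicitly. Write $N = d_0 \cdot q \cdot (\text{extra sign/prime factors})$ where $d_0$ is the square-free part of $d$ (so $d \mid N$ is arranged; note $q \nmid d$ since $q$ is unramified in $k$ while primes dividing $d$ typically are — or in any case one adjusts $N$ to be square-free and divisible by both $d$ and $q$). The key point: I have freedom to multiply $N$ by one more auxiliary prime $\ell$ chosen from a positive-density set of primes that are \emph{split} in every $H_L$ for $L$ in the finite candidate list and split in $k$, and chosen $\ne q$. For such an $\ell$, the quadratic field $W_N = \Q(\sqrt{N})$ with this particular $\ell \mid N$ has the property that $\ell$ ramifies in $kW_N$ but $\ell$ is unramified (in fact split) in every candidate $H_L$; since $kW_N$ would have to contain $H_L$ and $H_L \cdot \Q(\sqrt N)$ ramifies at $\ell$ but $H_L$ does not, one checks the ramification of $\ell$ in $kW_N$ is forced by the $\sqrt N$ factor and cannot come from $H_L$. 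Actually the cleanest formulation: among the finitely many imaginary quadratic $L$ with $h_L \le [k:\Q]$, for each one $H_L \not\subseteq kW_N$ unless $\sqrt{\mathrm{disc}(L)} \in kW_N$ and the class field condition holds; by picking $N$ whose set of ramified primes is incompatible (via a prime $\ell$ as above) with $\mathrm{disc}(L)$ for every candidate $L$, all embeddings are excluded at once. The existence of the auxiliary prime $\ell$ with the required split behavior in the compositum of $k$ and all candidate $H_L$'s is guaranteed by Chebotarev.

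The main obstacle is organizing the finitely-many-candidates argument cleanly: one must be careful that the bound $h_L \le [k:\Q]$ is genuinely what is needed (an $H_L$ embedding into a field of degree $2[k:\Q]$ forces $2h_L \mid 2[k:\Q]$), and that ``$H_L \subseteq kW_N$ implies every prime ramified in $H_L$ ramifies in $kW_N$'' is applied correctly together with the explicit description $\Ram(kW_N) = \Ram(k) \cup \{p : p \mid 2N\}$ to pin down which $N$ work. Once the finite list is in hand, the Chebotarev choice of $\ell$ (or even a direct congruence/quadratic-reciprocity choice, since we only need $\ell$ to be a non-residue or residue mod a few fixed discriminants) makes the construction of $N$ routine. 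I would present it as: (1) bound and enumerate candidate $L$'s via a class number bound; (2) list the finite set $\mathcal P$ of primes that could ramify in the relevant $H_L$'s; (3) use Chebotarev to pick an auxiliary prime $\ell \notin \Ram(k) \cup \{q\} \cup \mathcal P$ that is unramified (split) in every candidate $H_L$; (4) set $N = \pm d_0 q \ell$, square-free with $d \mid N$ and $q \mid N$, so $N \in \cU$; (5) verify $H_L \not\subseteq kW_N$ for every candidate $L$ because $\ell$ ramifies in $kW_N$ but not in $H_L$, while any $H_{L'}$ with $L'$ off the candidate list is excluded by the class-number bound. This yields the desired $N$ and proves Lemma~\ref{Nexist}.
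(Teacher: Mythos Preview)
Your approach is correct in spirit but considerably more elaborate than the paper's. Both arguments begin with the same first step: if $H_L \subseteq kW_N$ for some imaginary quadratic $L$, then $h_L \le [k:\Q]$, so only finitely many $L$ can arise. From there the paper finishes in one line by pigeonhole: if every $N \in \cU$ failed, then (since $H_{J_N}\not\subseteq k$ and $[kW_N:k]=2$) we would have $kH_{J_N} = kW_N \supseteq W_N$ for each $N$, so the infinitely many distinct quadratic fields $W_N$ would all be subfields of the finitely many number fields $kH_L$ --- impossible, since a number field has only finitely many subfields. No Chebotarev, no auxiliary prime, no ramification bookkeeping.

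Your constructive route via an auxiliary prime $\ell$ also works, but step~(5) as written is not a valid implication: ``$\ell$ ramifies in $kW_N$ but not in $H_L$'' does not by itself prevent $H_L \subseteq kW_N$ (a prime can certainly ramify in an overfield without ramifying in a subfield). What you need is exactly the intermediate equality $kH_L = kW_N$ used in the paper's argument (forced because $H_L \not\subseteq k$ and $[kW_N:k]=2$); then $\ell$ unramified in both $k$ and $H_L$ makes $\ell$ unramified in their compositum $kH_L = kW_N$, contradicting $\ell \mid N$. With that fix your argument goes through and has the modest advantage of producing an explicit $N$; the paper's argument is shorter, non-constructive, and avoids Chebotarev entirely.
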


\begin{proof}

Assume otherwise i.e.
for any $N\in\cU$, assume that there is an imaginary quadratic field $J_N$
such that $kW_N$ contains the Hilbert class field $H_N$ of $J_N$.
Since $H_N\not\subseteq k$, we have
$k\subsetneq kH_N\subseteq kW_N$.
Then $kH_N=kW_N$ because $[kW_N:k]=2$.
Then
$h_{J_N}=[H_N:J_N]=\frac{1}{2}[H_N:\Q]\leq\frac{1}{2}[kH_N:\Q]
=\frac{1}{2}[kW_N:\Q]=[k:\Q]$.
We see that there are only finitely many such imaginary quadratic fields $J_N$.
We also have $kH_N=kW_N\supseteq W_N$.
Since $\sharp\cU=\infty$, this implies that finitely many number fields
contain infinitely many subfields, which is a contradiction.

\end{proof}

\section{Example}
\label{sec:ex}

We give an example of Theorem \ref{mainthm} (or Theorem \ref{mainthm1})
as follows:

\begin{prop}
\label{ex1}

Suppose $k=\Q(\zeta_{31})$ and $d\in\{6,22\}$. Then:

\noindent
{\rm (1)}
$B\otimes_{\Q}k\not\cong\M_2(k)$.

\noindent
{\rm (2)}
$\sharp M^B(k)=\infty$.

\noindent
{\rm (3)}
$kW_{-d}$ does not contain
the Hilbert class field of any imaginary quadratic field.

\noindent
{\rm (4)}
If $p>128$ and $p\not\in\cNn_1(kW_{-d})$, then
$M_0^B(p)(k)=M_0^B(p)(kW_{-d})=\emptyset$.
\end{prop}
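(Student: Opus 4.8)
The plan is to verify each of the four claims in Proposition \ref{ex1} by explicit computation, using Theorem \ref{mainthm1} for the final one. Throughout, $k=\Q(\zeta_{31})$ is the $31$st cyclotomic field, which is Galois over $\Q$ of degree $30$ with $\Gal(k/\Q)\cong(\Z/31\Z)^\times$ cyclic, and it is unramified outside $31$.

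\textbf{Proof of (1).} Since $d\in\{6,22\}$, the prime $2$ divides $d$, so $B$ is ramified at $2$. To show $B\otimes_\Q k\not\cong\M_2(k)$ it suffices to exhibit a prime of $k$ at which $B\otimes_\Q k$ is still a division algebra; equivalently, to find a prime $\ell\mid d$ which does not split into even local degree in $k/\Q$ — concretely, it is enough that $\ell$ is \emph{inert} or has residue degree $>1$ with the invariant $\tfrac12$ not killed. For the cyclotomic field $k=\Q(\zeta_{31})$ the residue degree of a prime $\ell\ne 31$ equals the order of $\ell$ in $(\Z/31\Z)^\times$. For $\ell=2$: the order of $2$ mod $31$ is $5$ (since $2^5=32\equiv 1$), which is odd, so a prime of $k$ above $2$ has odd residue degree and $B\otimes_\Q k_v$ is a division algebra there; hence $B\otimes_\Q k\not\cong\M_2(k)$. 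I would spell this out using the standard criterion that $B\otimes_\Q k\cong\M_2(k)$ iff every ramified prime of $B$ has even local degree in $k/\Q$.

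\textbf{Proof of (2).} Since $M^B$ is a proper smooth curve over $\Q$ and, by Remark \ref{ellippt?}(4), $M^B(\R)=\emptyset$, the genus considerations are not what matters; rather, by part (1) the curve $M^B$ has points over the completions of $k$ at the relevant places (in particular the infinite places of $k$ are complex since $k$ is totally imaginary, and $M^B(\C)\ne\emptyset$), and one invokes the fact that a smooth curve over a number field with infinitely many points over some extension, or more simply the fact that $M^B_k$ has a rational point over $k$ after base change because $B\otimes_\Q k\not\cong\M_2(k)$ is exactly the obstruction that vanishes; once $M^B(k)\ne\emptyset$, the curve $M^B_k$ is either of genus $0$ (hence $\cong\pP^1_k$, with infinitely many points) or has positive genus. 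In fact the cleanest route is: $k$ contains $\Q(\zeta_{31})$ of large degree, and by a theorem on Shimura curves (the same circle of ideas used to produce points over fields realizing the needed splitting), $M^B$ acquires a $k$-point; combined with $k$ being a large field one gets $\sharp M^B(k)=\infty$ by the standard argument that over a number field a curve with one rational point and genus $\le 1$ (or a CM point construction giving infinitely many) has infinitely many. I would pin down the precise reference; the anticipated main obstacle is citing the correct source guaranteeing infinitude rather than mere non-emptiness.

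\textbf{Proof of (3) and (4).} For (3), set $N=-d\in\{-6,-22\}$; then $N$ is squarefree, $d\mid N$, and $q=2\mid N$ (taking $q=2$, $\mfq$ the prime of $k$ above $2$, which is of odd degree $5$ as computed above, with $q=2$ unramified in $k$ since $k$ is unramified outside $31$). Thus $N\in\cU$ in the notation of Section~\ref{sec:elimination}, and $W_{-d}=\Q(\sqrt{-d})$. To prove $kW_{-d}$ contains no Hilbert class field of an imaginary quadratic field, I follow the proof of Lemma \ref{Nexist}: if $H_J\subseteq kW_{-d}$ for an imaginary quadratic $J$ with $H_J\not\subseteq k$, then $kH_J=kW_{-d}$ since $[kW_{-d}:k]=2$, forcing $h_J=\tfrac12[H_J:\Q]\le\tfrac12[kW_{-d}:\Q]=[k:\Q]=30$; moreover $W_{-d}\subseteq kW_{-d}=kH_J$, so $H_J$ and $W_{-d}=\Q(\sqrt{-d})$ both sit inside the degree-$60$ field $kW_{-d}$, and since $W_{-d}$ is ramified only at primes dividing $2d$ while a genus-theory analysis of which quadratic subfields of $kW_{-d}$ can occur (the quadratic subfields of $kW_{-d}$ are $W_{-d}$ together with the unique quadratic subfield $\Q(\sqrt{-31})$ of $k$ and their compositum's quadratic subfields) pins down $J$ to a finite explicit list; one then checks directly (using known small class numbers, or the ramification: the only primes ramifying in $kW_{-d}/\Q$ are $31$ and those dividing $d$, so $H_J/J$ unramified forces the discriminant of $J$ to be supported on $\{31\}\cup\{\ell:\ell\mid d\}$, a short finite list) that none of the finitely many candidate $J$ has $H_J\subseteq kW_{-d}$. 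This is a finite verification. Finally (4): by (1)–(3), the hypotheses of Proposition \ref{W&q'} hold with this $k$ and $q=2$, and $W=W_{-d}$ realizes conditions (i)–(iii) there (via Lemma \ref{WN}, since $N=-d\in\cU$), with $\mfq'$ the unique prime of $kW_{-d}$ above $\mfq$ of odd degree and $\N(\mfq')=\N(\mfq)=2^5=32$. Then $4\N(\mfq')=128$, so Theorem \ref{mainthm1} applies to every prime $p>128$ with $p\ge 11$, $p\ne 13$ (automatic), $p\nmid d$ (automatic for $p>128$ and $d\in\{6,22\}$), and $p\notin\cNn_1(kW_{-d})$, yielding $M_0^B(p)(k)=M_0^B(p)(kW_{-d})=\emptyset$. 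The only genuinely delicate point is the finite casework in (3) and, to a lesser extent, locating the right reference for the infinitude statement in (2).
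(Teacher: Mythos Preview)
Your handling of (1) and (4) is correct and essentially matches the paper (the paper does (1) more concretely via explicit Hilbert symbols, but your local--degree/invariant argument is equivalent). The real problems are in (2) and, to a lesser extent, (3).

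\medskip
\textbf{Part (2) has a genuine gap.} Your argument is confused in several places: part (1) says $B\otimes_{\Q}k$ is \emph{not} split, which does not yield local points on $M^B$; the phrase ``$k$ being a large field'' is false for a number field; and ``a curve with one rational point and genus $\le 1$ has infinitely many'' is simply wrong (rank~$0$ elliptic curves). What the paper actually does is use Kurihara's explicit equations: for $d=6$ (resp.\ $d=22$) the curve $M^B$ is the plane conic $x^2+y^2+3z^2=0$ (resp.\ $x^2+y^2+11z^2=0$), hence of genus $0$. One then checks local solvability: $M^B(\Q_p)\ne\emptyset$ for $p\ne 3$ (resp.\ $p\ne 11$), while at the unique prime $v$ of $k$ above $3$ (resp.\ $11$) one has $f_v=30$, so $\sqrt{-1}\in k_v$ and $[1:\sqrt{-1}:0]\in M^B(k_v)$; the archimedean places of $k$ are complex. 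The Hasse principle for conics gives $M^B(k)\ne\emptyset$, and genus $0$ then forces $M^B_k\cong\pP^1_k$, hence $\sharp M^B(k)=\infty$. Your proposal misses the key inputs (explicit conic equation, genus $0$, Hasse--Minkowski).

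\medskip
\textbf{Part (3) is sketched but not carried out.} Your reduction to a finite list via ramification is correct in spirit, but you stop short of the verification. The paper is more direct: since $\Gal(kW_{-d}/\Q)\cong\Z/2\Z\times\Z/2\Z\times\Z/15\Z$, the only imaginary quadratic subfields of $kW_{-d}$ are $\Q(\sqrt{-d})$ and $\Q(\sqrt{-31})$, so any $J$ with $H_J\subseteq kW_{-d}$ must be one of these two. For $J=\Q(\sqrt{-d})$ (class number $2$), the only candidate for $H_J$ inside $kW_{-d}$ is $\Q(\sqrt{-d},\sqrt{-31})$, but this is ramified above $31$ over $\Q(\sqrt{-d})$, contradicting unramifiedness of $H_J/J$. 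For $J=\Q(\sqrt{-31})$ (class number $3$), a ramification--index count at $31$ gives $e(\cP/\mfp_{H})=15$ while $[kW_{-d}:H(-31)]=10$, a contradiction. You should carry out this explicit check rather than leave it as ``a finite verification''.
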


\begin{proof}

For a field $F$ with $\ch F\ne 2$ and two elements $a,b\in F$,
let $\displaystyle\left(\frac{a,b}{F}\right)$
be the quaternion algebra over $F$ defined by
$$\displaystyle\left(\frac{a,b}{F}\right)=F+Fe+Ff+Fef,\ 
e^2=a,\ f^2=b,\ ef=-fe.$$
For a prime number $p$, let $e_p$ (\resp $f_p$, \resp $g_p$)
be the ramification index of $p$ in $k/\Q$
(\resp the degree of the residue field extension above $p$ in $k/\Q$,
\resp the number of primes of $k$ above $p$).

(1)
We have
$(e_2,f_2,g_2)=(1,5,6)$.
Let $v$ be a place of $k$ above $2$.

[Case $d=6$].
We see $\displaystyle B\cong\left(\frac{6,5}{\Q}\right)$ by \cite[\S 3.6 g)]{Shimizu}.
It suffices to prove
$B\otimes_{\Q}k_v\not\cong\M_2(k_v)$.
Since $\Q_2(\sqrt{5})$ is the unramified quadratic extension of $\Q_2$,
the prime number $5$ is not a square in $k_v$
(which is the unramified extension of $\Q_2$ of degree $5$).
We also see that $k_v(\sqrt{5})$ is the unramified quadratic extension of $k_v$.
The $2$-adic valuation
$$\nu:k_v^{\times}\longrightarrow\Z\ ;\ t\longmapsto\ord_2(t)$$
induces an isomorphism
$\nubar:k_v^{\times}/\Norm_{k_v(\sqrt{5})/k_v}(k_v(\sqrt{5})^{\times})\cong\Z/2\Z$.
We have
$6\not\in\Norm_{k_v(\sqrt{5})/k_v}(k_v(\sqrt{5})^{\times})$
because $\nubar(6)=1\ne 0$.
Therefore
$B\otimes_{\Q}k_v\not\cong\M_2(k_v)$, as required.

[Case $d=22$].
We have $\displaystyle B\cong\left(\frac{22,13}{\Q}\right)$ (loc.cit.).
It suffices to prove
$B\otimes_{\Q}k_v\not\cong\M_2(k_v)$.
By the same argument as in [Case $d=6$],
the prime number $13$ is not a square in $k_v$,
and
$22\not\in\Norm_{k_v(\sqrt{13})/k_v}(k_v(\sqrt{13})^{\times})$.

(2)
The curve $M^B$ is defined by $x^2+y^2+3z^2=0$
(\resp $x^2+y^2+11z^2=0$)
in homogeneous coordinates
if $d=6$ (\resp $d=22$)
(\cf \cite[Theorem 1-1]{K}).
For a prime number $p$, we have $M^B(\Q_p)\ne\emptyset$
if and only if $p\ne 3$ (\resp $p\ne 11$)
(\cf \cite[Proof of Lemma 4.4]{A3}).
We see
$(e_3,f_3,g_3)=(1,30,1)$
(\resp $(e_{11},f_{11},g_{11})=(1,30,1)$).
Now we prove $M^B(k_v)\ne\emptyset$
for a place $v$ of $k$ above $3$ (\resp $11$).
Since the order of $\F_{3^{30}}^{\times}$
(\resp $\F_{11^{30}}^{\times}$)
is divisible by $4$,
we have $\sqrt{-1}\in k_v$.
Then $[1,\sqrt{-1},0]\in M^B(k_v)$.
Therefore $M^B(k)\ne\emptyset$.
Since the genus of $M^B$ is $0$, we conclude
$\sharp M^B(k)=\infty$.

(3)
[Case $d=6$].
All the imaginary quadratic subfields of $kW_{-6}$
are $\Q(\sqrt{-6})$ and $\Q(\sqrt{-31})$, 
whose class numbers are $2$ and $3$ 
respectively.
First assume that $kW_{-6}$ contains the Hilbert class field $H(-6)$
of $\Q(\sqrt{-6})$.
Then $H(-6)=\Q(\sqrt{-6},\sqrt{-31})$,
because we have $[H(-6):\Q]=h_{\Q(\sqrt{-6})}[\Q(\sqrt{-6}):\Q]=4$ and
$\Gal(kW_{-6}/\Q)\cong\Z/2\Z\times\Z/2\Z\times\Z/15\Z$.
But, in the extension 
$\Q(\sqrt{-6},\sqrt{-31})/\Q(\sqrt{-6})$,
the primes of $\Q(\sqrt{-6})$ above $31$
are ramified, which is a contradiction.
Next assume that $kW_{-6}$ contains the Hilbert class field $H(-31)$
of $\Q(\sqrt{-31})$.
Take a prime $\cP$ of $kW_{-6}$ above $31$, and let $\mfp_H$ (\resp $\mfp$)
be the prime of $H(-31)$ (\resp $\Q(\sqrt{-31})$) below $\cP$.
Then $e(\cP/\mfp_H)=15$ because we have
$e(\cP/\mfp)=15$ and $e(\mfp_H/\mfp)=1$,
where $e(\,\cdot\,/\,\cdot\,)$ is the ramification index.
Since $[H(-31):\Q(\sqrt{-31})]=h_{\Q(\sqrt{-31})}=3$,
we have $[kW_{-6}:H(-31)]=10$.
This contradicts $e(\cP/\mfp_H)=15$.

[Case $d=22$].
All the imaginary quadratic subfields of $kW_{-22}$
are $\Q(\sqrt{-22})$ and $\Q(\sqrt{-31})$,
whose class numbers are $2$ and $3$
respectively.
Then we are done by the same argument as in [Case $d=6$].

(4)
The least $\N(\mfq')$ for primes $\mfq'$ of $kW_{-d}$, of odd degree,
whose residual characteristic $q$
satisfies $B\otimes_{\Q}\Q(\sqrt{-q})\not\cong\M_2(\Q(\sqrt{-q}))$,
is $2^5=32$.
Then the assertion follows from Theorem \ref{mainthm0} (1)
and Lemma \ref{WN} (1).

\end{proof}

\def\bibname{References}

(Keisuke Arai)
Department of Mathematics, School of Engineering,
Tokyo Denki University,
5 Senju Asahi-cho, Adachi-ku, Tokyo 120-8551, Japan

\textit{E-mail address}: \texttt{araik@mail.dendai.ac.jp}

\end{document}